\newcommand{\N}{{\mathbb N}}
\newcommand{\R}{{\mathbb R}}
\newcommand{\Z}{{\mathbb Z}}
\newcommand{\Q}[1]{Q(#1)}
\newcommand{\Qi}[1]{Q^{*}(#1)}
\newcommand{\Pint}[1]{P^{*}(#1)}
\newcommand{\Qp}[1]{Q'(#1)}
\newcommand{\C}[2]{C_{#1}^{#2}}
\newcommand{\ceil}[1]{\left\lceil #1 \right\rceil}
\newcommand{\floor}[1]{\left\lfloor #1 \right\rfloor}
\newcommand{\card}[1]{\left| #1 \right|}
\newcommand{\sys}{\mathcal{S}}
\newcommand{\conv}{\mathrm{conv}}
\newcommand{\onevec}{\mathbf{1}}
\newcommand{\zerovec}{\mathbf{0}}
\newcommand{\OM}{\mathbb{O}}
\newcommand{\tabulatedset}[1]%
 {%
 \left\{ #1 \right\}%
 }
\newcommand{\setof}[2]%
 {%
 \left\{ #1 \, : \, #2 \right\}%
 }
\newtheorem{theorem}{Theorem}[section]
\newtheorem{lemma}[theorem]{Lemma}
\newtheorem{corollary}[theorem]{Corollary}
\newtheorem{remark}[theorem]{Remark}
\begin{document}

\begin{verbatim}\end{verbatim}\vspace{2.5cm}

\begin{center}
\textsc{\Large\textbf{Generalized minor inequalities for the set covering polyhedron
related to circulant matrices}}
\end{center}
\smallskip

\begin{center}
\large Paola B.\,Tolomei\footnote{ptolomei@fceia.unr.edu.ar}
\end{center}
\begin{center}
\textit{Departamento de Matem\'atica, Facultad de Ciencias Exactas, Ingenier\'ia y Agrimensura, Universidad Nacional de Rosario, Santa Fe, Argentina}
\medskip

\textit{and CONICET, Argentina}
\end{center}

\smallskip

\begin{center}
\large Luis M.\,Torres\footnote{luis.torres@epn.edu.ec}
\end{center}

\begin{center}
\textit{Centro de Modelización Matemática ModeMat - Escuela Politécnica Nacional, Quito, Ecuador}
\end{center}

\medskip

\begin{abstract}
We study the set covering polyhedron related to circulant matrices.
In particular, our goal is to characterize the first Chvátal closure
of the usual fractional relaxation. We present a family of
valid inequalities that generalizes the family of minor inequalities
previously reported in the literature and includes
new facet-defining inequalities. 
Furthermore, we propose a polynomial time separation algorithm for a particular subfamily of 
these inequalities.

\medskip

\noindent Keywords: \textit{set covering\;\; circulant matrices\;\; Chvátal closure}
\end{abstract}

\section{Introduction}
The \emph{weighted set covering problem} can be stated as 
$$
\mbox{(SCP)} \;\;\min\{c^Tx: Ax\geq \onevec, x\in  \{0,1\}^n\}
$$
where $A$ is an $m\times n$ matrix with $0,1$ entries, $c \in \Z^n$,
and $\onevec \in \R^m$ is the vector having all entries equal to one.
The SCP is a classic problem in combinatorial optimization 
with important practical applications (crew scheduling, facility location,
vehicle routing, to cite a few prominent examples), but hard to
solve in general. One
established approach to tackle this problem is to study the
polyhedral
properties of the 
set of its feasible solutions. \cite{BalasNg89,CornuejolsSassano89,NobiliSassano89,Sa}.

The \emph{set covering polyhedron} $\Qi{A}$ is defined as the convex
hull of all feasible solutions of SCP. Its \emph{fractional
  relaxation} $\Q{A}$ is the feasible region of the linear programming
relaxation of SCP, i.e., 
$$
\Q{A}:=\{x\in [0,1]^n: Ax\geq \onevec \}.
$$

It is known that SCP can be solved in polynomial time if $A$ belongs
to the particular class of \textit{circulant matrices} defined in the
next section. Hence, it is natural to ask whether an
explicit description in terms of linear inequalities can be provided for $\Qi{A}$ in
this case, an issue that has been addressed in several recent studies by researchers
in the field (see \cite{circu,BNTDAM13,BNTMMOR13,CN} among others). For the related
\emph{set packing polytope} of circulant matrices,
$$
\Pint{A}:= \conv (\{Ax\leq \onevec, x\in  \{0,1\}^n\})
$$
such a description follows from the results published in \cite{EisenbrandEtAl05}.

Bianchi et al.~introduced in \cite{BNTDAM13} a family of facet-defining
inequalities for $\Qi{A}$ which are associated with certain structures
called circulant minors. Moreover, the authors presented in \cite{BNTMMOR13} two families
of circulant matrices for which $\Qi{A}$ is completely described by
this class of \emph{minor inequalities}, together with the full-rank
inequality and the inequalities defining $\Q{A}$, usually denoted as
boolean facets. The existence of a third family of circulant matrices having this
property follows from previous results obtained by Bouchakour et al.~\cite{Mah} in the context
of the dominating set polytope of some graph classes.

If an inequality $a^Tx \leq b$ is valid for a polytope $P \subset \R^n$ and
$a \in \Z^n$, then $a^Tx \leq \floor{b}$ is valid for the integer polytope 
$P_I:= \conv(P \cap \Z^n)$. This procedure is called Chvátal-Gomory 
rounding, and it is known that the system of all linear inequalities which 
can be obtained in this way defines a new polytope $P'$, the 
\emph{first Chvátal closure of $P$}. Moreover, iterating this procedure yields $P_I$
in a finite number of steps. An inequality is said to have \emph{Chvátal rank} of 
$t$ if it is valid for the $t$-th Chvátal closure of a polytope.
All inequalities mentioned above have Chvátal rank less than or
equal to one.

With the aim of investigating if the results in \cite{BNTMMOR13,Mah}
can be generalized to all circulant matrices, 
we have tried to characterize the first Chvátal
closure of $\Q{A}$ for any circulant matrix $A$.
In particular, we addressed the question whether the system consisting 
of minor inequalities, boolean facets, and the
full-rank inequality is sufficient for describing $\Qp{A}$. We have obtained a
negative answer to this question in the form of a new class of valid inequalities for $\Qi{A}$ which contains minor
inequalities as a proper subclass. All inequalities from this class
have Chvátal rank equal to one, and besides, some of them define new
facets of $\Qi{A}$, as we show by an example.

This paper is organized as follows. In the next section, we introduce some
notation and preliminary results required for our work. In Section~\ref{sec:clausura}
we describe our approach for computing the first Chvátal closure of $\Q{A}$
and define the new class of \emph{generalized minor inequalities}. A separation 
algorithm for a particular subclass of these is provided in Section \ref{sec:separation}.
Finally, some conclusions and possible directions for future work are discussed
in Section~\ref{sec:conclusions}. A preliminary version of this article appeared without proofs in \cite{TolomeiTorres13}.

\section{Notations, definitions and preliminary results}
\label{sec:notations}

For $n \in \N$, let $[n]$ denote the additive group defined on the set
$\tabulatedset{1, \ldots, n}$, with integer addition modulo $n$.
Throughout this article, if $A$ is a $0,1$ matrix of order $m\times
n$, then we consider the columns (resp. rows) of $A$ to be indexed by
$[n]$ (resp. by $[m]$). In particular, addition of column (resp. row) indices
is always considered to be taken modulo $n$ (resp. modulo $m$). Two matrices $A$ and $A'$ are
\emph{isomorphic}, denoted by $A\approx A'$, if $A'$ can be
obtained from $A$ by permutation of rows and columns. Moreover, we say
that a row $v$ of $A$ is a \textit{dominating row} if $v\geq u$ for
some other row $u$ of $A$, $u\neq v$.

Given $N\subset [n]$, the \emph{minor of} $A$ \emph{obtained by
  contraction of} $N$, denoted by $A/N$, is the submatrix of $A$ that
results after removing all columns with indices in $N$ and all
dominating rows.
In this work, when we refer to \emph{a minor} of $A$ we always
consider a minor obtained by contraction.

Let $n, k \in \N$ with $2\leq k\leq n-2$, and
$C^i:=\{i,i+1,\ldots,i+(k-1)\}\subset [n]$ for every $i\in [n]$.  
With a little abuse of notation we will also use $C^i$ to denote
the incidence vector of this set. A
\emph{circulant matrix} $C_n^{k}$ is the square matrix of order $n$ whose $i$-th
row vector is $C^i$.  Observe that 
$C^i=\sum_{j=i}^{i + k-1} e^j$, where $e^j$ is the $j$-th canonical vector
in $\R^n$. 

A minor of $C_n^k$ is called a \emph{circulant minor} if it is isomorphic to 
a circulant matrix $C_{n'}^{k'}$. As far as we are aware, circulant minors were 
introduced for the first time in \cite{CN}, where the authors used them as a tool
for establishing a complete description of ideal and minimally nonideal circulant matrices. 
More recently, Aguilera \cite{Nes} completely characterized the subsets $N$ of
$[n]$ for which $\C{n}{k}/N$ is a circulant minor. We review at next his
main result, as some terms will be needed for the separation algorithm presented
in Section~\ref{sec:separation}. 

Given $C^{k}_n$, the digraph $G(C^{k}_n)$ has vertex set $[n]$ and $(i,j)$ is an arc 
of $G(C^{k}_n)$ if $j\in \{i+k,i+k+1\}$. We call arcs of the form $(i,i+k)$ \emph{short} arcs
and arcs of the form $(i,i+k+1)$ \emph{long} arcs. Associated with any directed cycle $D$
in $G(C^{k}_n)$, three parameters can be defined: its number $n_2$ of short arcs, its number $n_3$
of long arcs, and the number $n_1$ of turns around the set of nodes  it makes. Hence, the
relationship $n_1 n = n_2 k + n_3(k+1)$ must hold for the integers $n, k, n_1, n_2,$ and $n_3$.
In our current notation, Theorem 3.10 of \cite{Nes} states the following.

\begin{theorem} \cite{Nes} \label{minorsgrafo}
Let $n,k$ be integers verifying $2\leq k \leq n-1$ and let $N\subset \Z_n$ such that $1\leq |N|\leq n-2$.
Then, the following are equivalent:
\begin{enumerate}
\item $C_n^k/N$ is isomorphic to $C_{n'}^{k'}$.  
\item $N$ induces in $G(C^{k}_n)$ $d\geq 1$ disjoint simple dicycles $D_0,\ldots,D_{d-1}$, each of them having the same parameters $n_1$, $n_2$ and $n_3$ and such that 
$|N|=d(n_2+n_3)$, 
$n'=n-d(n_2+n_3)\geq 1$, and $k'=k-dn_1\geq 1$.
\end{enumerate}
\end{theorem}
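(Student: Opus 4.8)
The plan is to prove the two implications separately, in both cases by tracking how the rows of $\C{n}{k}$ transform when the columns indexed by $N$ are deleted. The basic tool is an elementary \emph{window lemma}: after contraction the row $C^i$ becomes the set $C^i\setminus N$, which covers exactly $k-|C^i\cap N|$ surviving columns, and by comparing $C^i$ with its cyclic neighbours $C^{i-1}$ and $C^{i+1}$ one checks that whenever a window meets $N$ in strictly fewer columns than a neighbour does, it contains that neighbour and is therefore a dominating (hence deleted) row. Thus the whole statement reduces to controlling the numbers $|C^i\cap N|$ together with the cyclic arrangement of $N$, and the natural object to introduce is a successor relation on $N$: from $j\in N$ one steps to $j+k$ (a short arc) or to $j+k+1$ (a long arc), that is, exactly the arcs of $G(\C{n}{k})$.

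For $(2)\Rightarrow(1)$, assume $N$ is covered by $d$ vertex-disjoint simple dicycles $D_0,\dots,D_{d-1}$ sharing the parameters $n_1,n_2,n_3$. The cleanest case is a single dicycle making one turn ($d=1$, $n_1=1$): here the dicycle visits the removed columns in increasing cyclic order, so each arc isolates a block of surviving columns lying strictly between two consecutive removed columns, of size $k-1$ after a short arc and $k$ after a long arc. Relabelling the $n-|N|$ surviving columns cyclically, every non-dominated window becomes an interval of the common length $k'=k-n_1$, and consecutive windows run through all cyclic shifts of such an interval, which is precisely an isomorphism with $C_{n'}^{k'}$. For several turns or several dicycles the removed columns interleave and the block picture must be applied globally; here I would exploit the strong regularity forced by the equal-parameter hypothesis to reduce, one dicycle at a time, $\C{n}{k}$ to $C_{n-(n_2+n_3)}^{\,k-n_1}$, iterating $d$ times and using that contraction of a union of column sets may be carried out sequentially. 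The identity $n_1 n = n_2 k + n_3(k+1)$ then makes $|N|=d(n_2+n_3)$, $n'=n-|N|$ and $k'=k-d n_1$ mutually consistent.

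The implication $(1)\Rightarrow(2)$ is harder and is where I expect the main difficulty. Starting from $\C{n}{k}/N\approx C_{n'}^{k'}$, every non-dominated row has the same cardinality $k'$, so the window lemma yields a \emph{uniform deficiency}: each non-dominated window meets $N$ in exactly $c:=k-k'$ columns, and no window meets $N$ in more than $c$ (otherwise some non-dominated row would have fewer than $k'$ elements). Using this upper bound together with the fact that survivors realize it, I would show that for each $j\in N$ at least one of $j+k$, $j+k+1$ again lies in $N$, which supplies an out-arc at every vertex of the subdigraph of $G(\C{n}{k})$ induced on $N$; the symmetric argument supplies in-arcs, and a short counting argument promotes this to a decomposition of $N$ into vertex-disjoint simple dicycles. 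Two points require care. First, the contraction identifies several windows into a single row, so the successor of $j$ must be read off from the surviving columns rather than from the row indices, and this is exactly what the duplicate-row bookkeeping in the window lemma controls. Second, and most delicately, one must prove that all the dicycles carry the same $n_1,n_2,n_3$; I would argue this by contradiction, since unequal turning numbers or unequal short/long counts would produce two length-$k$ windows, in different regions of the circle, meeting $N$ in different numbers of columns, contradicting the uniform deficiency. Combined once more with $n_1 n = n_2 k + n_3(k+1)$ this gives $c=d n_1$ and the asserted values of $n'$ and $k'$.

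Finally I would record where the range hypotheses enter: $|N|\ge 1$ guarantees $d\ge 1$, while $|N|\le n-2$ and $2\le k\le n-1$ guarantee that the minor keeps at least one column and at least one surviving column per non-dominated row, namely $n'\ge 1$ and $k'\ge 1$, the inequalities already built into statement (2).
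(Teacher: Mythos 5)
The first thing to note is that the paper contains no proof of this statement: Theorem~\ref{minorsgrafo} is quoted verbatim (in the paper's notation) from Theorem~3.10 of Aguilera~\cite{Nes}, so your attempt can only be measured against that source. Your overall roadmap does match the known development --- a window/domination analysis of how the rows $C^i$ behave under contraction, the successor relation $j \mapsto j+k$ or $j+k+1$ realized by the arcs of $G(\C{n}{k})$, and the identity $n_1 n = n_2 k + n_3(k+1)$ --- and your preliminary observations are sound: the domination criterion ($C^i$ dominates $C^{i+1}$ exactly when $i+k \in N$) is correct, and the uniform-deficiency claim for $(1)\Rightarrow(2)$ is provable as you suggest, since a window meeting $N$ in more than $c := k-k'$ columns would lie above some minimal surviving row, which has cardinality exactly $k'$. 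But at each of the three places where the theorem has real content, the proposal substitutes a declaration of intent for an argument. In $(1)\Rightarrow(2)$, the existence of an out-neighbour of every $j \in N$ inside $N$ is only announced (``I would show''), the promotion to a \emph{disjoint simple} dicycle cover is deferred to ``a short counting argument'' --- note you additionally need out-degree exactly one within $N$, i.e., that $j+k$ and $j+k+1$ cannot both lie in $N$, or the induced structure is not a union of simple dicycles at all --- and, most seriously, the proposed mechanism for the equal-parameters claim fails as stated. Uniform deficiency constrains only \emph{non-dominated} windows; dominated windows legitimately meet $N$ in fewer than $c$ columns, so exhibiting ``two length-$k$ windows meeting $N$ in different numbers of columns'' is no contradiction unless both windows are shown to survive the contraction. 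Worse, window counts are structurally blind to $n_2$ and $n_3$ separately: two dicycles with the same turning number $n_1$ but parameters $(n_2, n_3)$ and $(n_2 - (k+1),\, n_3 + k)$ satisfy the same identity $n_1 n = n_2 k + n_3(k+1)$ and contribute identically to any count of $N$-elements per window, so equality of $n_2$ and $n_3$ across dicycles can only come from the finer cyclic gap structure forced by the isomorphism with $\C{n'}{k'}$, which your sketch never engages.

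The converse direction has a gap of the same kind. Beyond the clean case $d = n_1 = 1$, everything is deferred to ``reduce one dicycle at a time,'' but the assertion that, after contracting the vertex set of one dicycle, the remaining dicycles again induce dicycles \emph{with the same parameters} in $G(C_{n-(n_2+n_3)}^{k-n_1})$ --- under the relabelling of the surviving columns --- is precisely the nontrivial content here; it does not follow from the innocuous fact that contraction of a union of column sets can be performed sequentially (that handles the columns, not the preservation of the arc structure and parameters under re-indexing). Likewise, a single dicycle with $n_1 > 1$ is not covered by your block picture: consecutive elements of $N$ in the cyclic order of $[n]$ are then not consecutive along the dicycle, the gaps between them are no longer $k-1$ or $k$, and one must argue on the induced cyclic order of the surviving columns that every non-dominated window becomes an interval of length $k' = k - n_1$. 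In short, this is a plausible outline whose skeleton agrees with Aguilera's proof, but the three delicate steps you yourself flag --- the dicycle decomposition, the equality of parameters, and the general-parameter converse --- are exactly where the proposal stops, and in the equal-parameters step the specific contradiction you propose would not go through even if elaborated.
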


The structure of $\Qi{\C{n}{k}}$ has been the subject of many previous
studies. It is known that $\Qi{\C{n}{k}}$ is a full dimensional
polyhedron. Furthermore, for every $i\in [n]$, the constraints
$x_i\geq 0$, $x_i\leq 1$ and $\sum_{j\in C^{i}} x_j \geq 1$ are facet
defining inequalities of $\Qi{\C{n}{k}}$ and we call them
\emph{boolean facets} \cite{Sa}. We will denote by $\sys_0$ the system of linear
inequalities corresponding to boolean facets.

The \emph{rank constraint} $\sum_{i=1}^n x_i\geq \ceil{\frac{n}{k}}$
is always valid for $\Qi{\C{n}{k}}$ and defines a facet if and only if $n$
is not a multiple of $k$ \cite{Sa}.
In \cite{BNTDAM13} the authors obtained another family of
facet-defining inequalities for $\Qi{\C{n}{k}}$ associated with
circulant minors.

\begin{lemma} \cite{BNTDAM13} \label{nosotras}
Let $N\subset [n]$ such that $\C{n}{k}/ N \approx \C{n'}{k'}$,
and let $W=\{i\in N : i-k-1 \in N\}$.
Then, the  inequality 
\begin{equation}\label{ecu}
\sum_{i\in W} 2 x_i + \sum_{i\notin W} x_i \geq \ceil{\frac{n'}{k'}}
\end{equation}
is a valid inequality for $Q'(C^k_n)$. Moreover, if $2\leq k'\leq
n'-2$, $\left\lceil \frac{n'}{k'}\right\rceil >
\left\lceil\frac{n}{k}\right\rceil$ and $n'= 1(\mathrm{mod} \,k')$,
this inequality defines a facet of $\Qi{\C{n}{k}}$.
\end{lemma}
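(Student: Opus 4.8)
The plan is to establish the two assertions separately: first that \eqref{ecu} is a Chv\'atal--Gomory cut of rank one for $\Q{\C{n}{k}}$, hence valid for $\Qp{\C{n}{k}}$, and then that it is facet-defining under the stated hypotheses. For validity I would produce an explicit rank-one derivation. Let $R\subseteq[n]$ be the set of indices of the non-dominating rows that remain after contracting $N$; by hypothesis these rows, restricted to the columns in $[n]\setminus N$, form a copy of $\C{n'}{k'}$, so $\card{R}=n'$. I would assign the multiplier $1/k'$ to each covering inequality $\sum_{j\in C^i}x_j\ge 1$ with $i\in R$ and add them up, obtaining $\sum_{\ell\in[n]}(a_\ell/k')\,x_\ell\ge n'/k'$, where $a_\ell:=\card{\setof{i\in R}{\ell\in C^i}}$ counts how many surviving rows cover column $\ell$. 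Rounding the coefficients up and the right-hand side to $\ceil{n'/k'}$ then yields a valid inequality for the first Chv\'atal closure, and the whole point is to check that this rounded inequality is exactly \eqref{ecu}.

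The heart of the validity argument is therefore the evaluation of the integers $a_\ell$. For $\ell\in[n]\setminus N$ the circulant structure of the minor forces $a_\ell=k'$, so the pre-rounding coefficient equals $1$ and rounds to $1$, matching \eqref{ecu}. The crux is to show that for $\ell\in N$ one has $k'<a_\ell\le 2k'$ precisely when $\ell\in W$, and $0<a_\ell\le k'$ otherwise; this is what turns the coefficient into $2$ on $W$ and $1$ on $N\setminus W$ after rounding. Here I would invoke Theorem~\ref{minorsgrafo}: $N$ decomposes into $d$ disjoint dicycles of $G(\C{n}{k})$, every node of $N$ has exactly one entering arc, and $\ell\in W$ is equivalent to the entering arc of $\ell$ being a long arc $(\ell-k-1,\ell)$. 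Tracking which rows $C^i$ with $i\in R$ still contain a contracted index $\ell$, and relating the short/long pattern of the incident arcs to this count, is the main obstacle; I expect that a careful local analysis around each node of the dicycles, using the relation $n_1n=n_2k+n_3(k+1)$ to control the global counts, delivers the required bounds on $a_\ell$.

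For the facet statement I would work in $\R^n$, where $\Qi{\C{n}{k}}$ is full dimensional, and exhibit $n$ affinely independent integer feasible points at which \eqref{ecu} holds with equality. The natural source of such points is the minor: since $n'\equiv 1\pmod{k'}$ we have $\ceil{n'/k'}=(n'-1)/k'+1$, and $\C{n'}{k'}$ admits minimum covers of exactly this size consisting of essentially equally spaced columns. Extending any such cover $z^*$ of the minor to a vector $x^*\in\tabulatedset{0,1}^n$ by setting $x^*_i=0$ for every $i\in N$ produces a feasible point of $\Qi{\C{n}{k}}$: the surviving rows are covered because $z^*$ covers the minor, and each dominating row is covered because its restricted support contains that of some surviving row. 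Such an $x^*$ is tight for \eqref{ecu} since its support avoids $W$ and has total weight $\ceil{n'/k'}$. Rotating these covers by the cyclic symmetry of $\C{n}{k}$ yields many tight points, and the hypotheses $2\le k'\le n'-2$ and $\ceil{n'/k'}>\ceil{n/k}$ guarantee, respectively, that the rank constraint of the minor is itself facet-defining and that \eqref{ecu} is not dominated by the ordinary rank inequality.

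The delicate part, and the step I expect to be the main obstacle, is completing the family of tight points so that its affine hull has dimension $n-1$: the covers just described all vanish on $N$ and hence span only an $(n'-1)$-dimensional direction, so I must construct $\card{N}$ further tight feasible points that activate the contracted coordinates. For an index $i\in N\setminus W$ (coefficient $1$ in \eqref{ecu}) the plan is to set $x_i=1$ and combine it with a suitable cover of size $\ceil{n'/k'}-1$ of the remaining rows, checking that restoring column $i$ lowers the covering number by exactly one so that the point stays tight; the congruence $n'\equiv 1\pmod{k'}$ is what makes these near-minimum covers available. Indices $i\in W$, carrying coefficient $2$, require saving two units and are correspondingly harder, so a case analysis driven by whether the incident arcs are short or long will be needed. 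Verifying that the resulting $n$ points are affinely independent is the final, and most technical, hurdle.
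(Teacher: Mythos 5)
This lemma is imported: the paper states it with a citation to \cite{BNTDAM13} and contains no proof of its own, so your proposal can only be measured against the argument in that reference and against the machinery the present paper builds on top of it. Your validity half is essentially the right (and the known) route: aggregating the $n'$ surviving rows $C^i$ with $i+1\notin N$, each with multiplier $1/k'$, and rounding is exactly how the minor inequality arises, and it is precisely what Theorem~\ref{th:minors-gs} of this paper generalizes (the case $r=1$ of inequality \eqref{eq:r-minors}). Moreover, the count you defer as ``the main obstacle'' is not merely bounded as you conjecture but exact: each column indexed by $W$ has support $k'+1$ among the surviving rows, and each column indexed by $N\setminus W$ has support $k'$ --- the present paper itself invokes this fact, quoting the proof of Theorem~3 of \cite{BNTDAM13}, inside the proof of Theorem~\ref{th:minors-gs}. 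So that step is available, and your dicycle-based plan via Theorem~\ref{minorsgrafo} is the standard way to establish it. One technical point you should make explicit: a Chv\'atal--Gomory cut requires integral left-hand-side coefficients before the right-hand side is rounded, so ``rounding the coefficients up'' must be implemented by adding the nonnegativity rows $x_\ell\geq 0$ with multipliers $\left\lceil a_\ell/k'\right\rceil - a_\ell/k'$ to the aggregation; as written this is gestured at rather than derived.

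The facet half, by contrast, has a genuine gap. The tight points you construct all vanish on $N$ and span at most an $(n'-1)$-dimensional affine space, as you yourself note; the remaining $\card{N}$ tight points are only announced, never built, and they are exactly where every stated hypothesis does its work. Concretely, for $i\in N\setminus W$ you must exhibit a cover of \emph{all} $n$ rows of $\C{n}{k}$ of total weight $\ceil{\frac{n'}{k'}}$ whose support contains $i$, and for $i\in W$ (coefficient $2$) a feasible cover using $i$ together with only $\ceil{\frac{n'}{k'}}-2$ further columns outside $W$; that such savings of one and two units are possible is precisely what $n'=1 \ (\mathrm{mod}\, k')$ and $\ceil{\frac{n'}{k'}}>\ceil{\frac{n}{k}}$ must be used to show, and your sketch stops at ``a case analysis \dots will be needed.'' Your attribution of roles to the hypotheses is also loose: $2\leq k'\leq n'-2$ ensures the minor is a genuine circulant of the class under consideration, while $n'=1\ (\mathrm{mod}\, k')$ already implies $n'$ is not a multiple of $k'$ (so the minor's rank constraint is facet-defining); neither observation substitutes for the missing constructions. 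Without those points and the final affine-independence verification, the proposal establishes only validity (modulo the support count discussed above), not the facet statement.
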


Observe that the set $W$ uniquely determines the set $N$, and hence
the minor $\C{n}{k}/ N$ associated with it.
The authors termed \eqref{ecu} as the \emph{minor inequality
  corresponding to} $W$. Moreover, the authors showed that
every non boolean facet-defining inequality of $\Qi{C^3_n}$ (whose
facetial structure had been previously characterized in \cite{Mah}) 
is either the rank constraint or a minor inequality. 
Similarly, $\Qi{C_{2k}^k}$ and $\Qi{C_{3k}^k}$ are completely described by boolean facets and 
minor inequalities, for any $k \geq 2$ \cite{BNTMMOR13}.

\section{Computing the first Chvátal closure}
\label{sec:clausura}

In our attempt at finding a linear description of the first
Chvátal closure of $\Q{\C{n}{k}}$, we use the following well-known
result from integer programming:
\begin{lemma}
Let $P=\setof{x \in \R^n}{Ax \geq b}$ be a nonempty polyhedron
with $A$ integral and $Ax \geq b$ totally dual integral. Then,
$P'=\setof{x \in \R^n}{Ax \geq \ceil{b}}$.
\end{lemma}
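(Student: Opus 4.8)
The plan is to prove the two inclusions $P' \subseteq \setof{x}{Ax \geq \ceil{b}}$ and $\setof{x}{Ax \geq \ceil{b}} \subseteq P'$ separately, working from the definition of the first Chv\'atal closure as
$$P' = \bigcap_{u \geq \zerovec,\; u^TA \in \Z^n} \setof{x \in \R^n}{(u^TA)\,x \geq \ceil{u^Tb}}.$$
The first inclusion is immediate: for each row index $i$, choosing $u = e^i$ is admissible because $A$ is integral, and the resulting Chv\'atal--Gomory cut is exactly $A_i\,x \geq \ceil{b_i}$. Intersecting over all $i$ shows that every point of $P'$ satisfies $Ax \geq \ceil{b}$.

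For the reverse inclusion I would fix an arbitrary admissible multiplier $u \geq \zerovec$ with $w := u^TA \in \Z^n$ and show that the corresponding cut $w\,x \geq \ceil{u^Tb}$ is already valid for $\setof{x}{Ax \geq \ceil{b}}$. Here the hypotheses enter decisively. Since $w$ is integral and the linear program $\min\setof{w\,x}{Ax \geq b}$ is feasible, total dual integrality of $Ax \geq b$ guarantees an \emph{integral} optimal solution $y^*$ of the dual $\max\setof{b^Ty}{A^Ty = w^T,\; y \geq \zerovec}$. The point of passing from $u$ to $y^*$ is that both induce the same linear functional, $u^TA = (y^*)^TA = w$, so I may evaluate the cut using $y^*$ while still controlling the right-hand side through the dual-feasible point $u$.

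It then remains to chain the inequalities. For any $x$ with $Ax \geq \ceil{b}$, nonnegativity of $y^*$ gives $w\,x = (y^*)^T(Ax) \geq (y^*)^T\ceil{b}$; because $y^*$ and $\ceil{b}$ are both integral, the quantity $(y^*)^T\ceil{b}$ is an integer, and since $\ceil{b} \geq b$ together with optimality of $y^*$ over the feasible dual point $u$ yields $(y^*)^T\ceil{b} \geq (y^*)^Tb \geq u^Tb$, this integer is at least $u^Tb$, hence at least $\ceil{u^Tb}$. Therefore $w\,x \geq \ceil{u^Tb}$, as required. I expect the crux to be exactly this last step: the naive bound $w\,x \geq u^T\ceil{b}$, available without any integrality assumption, does not in general reach $\ceil{u^Tb}$, since $w\,x$ need not be integral for fractional $x$. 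Total dual integrality is precisely what lets me substitute the integral multiplier $y^*$ and thereby land the bound on an integer that dominates $u^Tb$.
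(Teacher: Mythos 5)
Your proof is correct. The paper itself states this lemma \emph{without} proof, invoking it as a well-known result from integer programming (it is Schrijver's classical theorem on totally dual integral systems, cf.\ \cite{libro}), and your argument is exactly the standard one: unit multipliers $u=e^i$ give the easy inclusion, and for the converse you correctly use nonemptiness of $P$ together with dual feasibility of $u$ to ensure the LP optimum is finite and attained, replace $u$ by the integral optimal dual solution $y^*$ supplied by total dual integrality, and exploit integrality of $(y^*)^T\ceil{b}$ to round the bound $(y^*)^T\ceil{b} \geq u^Tb$ up to $\ceil{u^Tb}$ --- including the accurate observation that the naive estimate $u^T\ceil{b}$ would not suffice.
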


As we shall see below, if all vertices of $P$ are known then a totally
dual integral system describing the polyhedron can be computed from
this information. This is the case for $\Q{\C{n}{k}}$, whose vertices
have been completely characterized by Argiroffo and Bianchi
\cite{circu}. 
\begin{lemma}[\cite{circu}]
\label{th:vertices}
Let $x^*$ be a vertex of $\Q{\C{n}{k}}$. 
Then one of the following statements holds:
\begin{enumerate}
\renewcommand{\theenumi}{\roman{enumi}}
\renewcommand{\labelenumi}{(\theenumi)}
\item $x^*$ is integral.
\item $x^*= \frac{1}{k} \onevec$
\item There exists $N \subset [n]$ with $\C{n}{k}/N \cong
\C{n'}{k'}$ and $\gcd(n', k')=1$ such that
$$
x_i^*:= \left\{
\begin{array}{ll}
\frac{1}{k'}, & \mbox{if $i \not\in N$}, \\ 
0, & \mbox{otherwise}.
\end{array}
\right.
$$
\end{enumerate}
\end{lemma}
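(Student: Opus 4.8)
The plan is to use the standard description of a vertex as the unique solution of the subsystem of constraints that are tight at it, and then to exploit the cyclic interval structure of $\C{n}{k}$ to pin down the fractional coordinates. Let $x^*$ be a vertex of $\Q{\C{n}{k}}$, and write $N := \setof{i\in[n]}{x^*_i = 0}$ for its set of null coordinates and $P := [n]\setminus N$ for its positive support. If $P$ contains no fractional coordinate then $x^*$ is integral and we are in case~(i), so from now on I assume $x^*$ is not integral. First I would record an elementary perturbation argument: every $i\in P$ must belong to at least one covering constraint that is tight at $x^*$. Indeed, if some positive coordinate $x^*_i$ occurred only in slack covering constraints, then, since the bound $x_i\geq 0$ is also slack (as $x^*_i>0$), we could decrease $x^*_i$ slightly while remaining feasible, contradicting that $x^*$ is a vertex.

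Next I would extract periodicity from pairs of consecutive tight rows. Since $C^{i+1}$ is obtained from $C^i$ by discarding index $i$ and adjoining index $i+k$, whenever both $\sum_{j\in C^i}x^*_j = 1$ and $\sum_{j\in C^{i+1}}x^*_j = 1$ hold, subtracting the two equalities yields $x^*_{i+k}=x^*_i$. The heart of the argument is to combine these local relations with the rank condition for a vertex in order to show that all positive coordinates share a common value $v$. Once this is done, each tight covering interval contains a fixed number $q$ of positive coordinates, each equal to $v$, and sums to $1$, so $v=\frac{1}{q}$ for a positive integer $q$; since $x^*$ is not integral we have $v<1$, hence $q\geq 2$, and in particular no coordinate equals $1$. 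Thus $P$ is exactly the fractional support and $x^*_i=\frac{1}{q}$ for every $i\in P$.

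It then remains to identify the combinatorial type of $\C{n}{k}/N$ and to fix the value. The uniform value $\frac{1}{q}$ forces $P$ to be distributed periodically along the cycle: a row of $\C{n}{k}$ is tight precisely when it meets $P$ in exactly $q$ columns, while a row meeting $P$ in more than $q$ columns is a dominating row and disappears under contraction. I would translate this regularity into the hypothesis of Theorem~\ref{minorsgrafo} (the null set $N$ inducing disjoint dicycles with common parameters in $G(\C{k}{n})$), concluding that $\C{n}{k}/N\approx\C{n'}{k'}$ with $n'=n-\card{N}=\card{P}$ and $k'=q$. Restricting $x^*$ to $P$ then gives the point $\frac{1}{k'}\onevec$ of $\Q{\C{n'}{k'}}$. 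Finally, to obtain $\gcd(n',k')=1$ I would use that this restriction is itself a vertex of $\Q{\C{n'}{k'}}$: the face of $\Q{\C{n}{k}}$ defined by $x_i=0$ for $i\in N$ is the vertex $x^*$, and the dominating rows discarded by contraction are redundant for $x\geq\zerovec$. At $\frac{1}{k'}\onevec$ all $n'$ covering constraints are tight, and they are linearly independent if and only if the circulant $\C{n'}{k'}$ is nonsingular, which happens exactly when $\gcd(n',k')=1$; otherwise $\frac{1}{k'}\onevec$ lies on a face of positive dimension and cannot be a vertex. The subcase $N=\emptyset$ yields statement~(ii), and $N\neq\emptyset$ yields statement~(iii).

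The step I expect to be the main obstacle is establishing that all positive coordinates share a single value. The periodicity relations furnished by consecutive tight rows are purely local and only link coordinates at distance $k$, so turning them into a global statement requires careful use of the cyclic overlap pattern of the tight intervals together with the rank-$n$ condition — in particular, ruling out configurations in which $P$ splits into cyclic arcs carrying different values, and ruling out coordinates equal to $1$. A secondary delicate point is making the passage from this uniform-value structure to the isomorphism $\C{n}{k}/N\approx\C{n'}{k'}$ fully rigorous, rather than merely matching it against Theorem~\ref{minorsgrafo}.
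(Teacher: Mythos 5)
This lemma is quoted by the paper from \cite{circu} without proof, so your attempt can only be judged on its own merits, and as it stands it has a genuine gap at its core: the step you yourself flag as ``the main obstacle'' --- that all positive coordinates of a nonintegral vertex share a single value --- is announced but never carried out, and it is precisely where all the difficulty of the lemma lives. The local relations $x^*_{i+k}=x^*_i$ only hold when rows $i$ and $i+1$ are \emph{both} tight, and at the fractional vertices in question the tight covering rows are exactly those $i$ with $i+1\notin N$; the chain of relations therefore breaks at every index of $N$, so these relations alone cannot rule out the support $P$ splitting into cyclic arcs carrying different values. Bridging this requires a substantive combinatorial argument with the rank-$n$ condition on the tight system (or, as in \cite{circu}, an analysis of the square nonsingular submatrices of the constraint system and their relation to circulant minors), and nothing in your sketch supplies it. A related flaw: your perturbation argument only shows that coordinates with $0<x^*_i<1$ lie in a tight covering row. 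If $x^*_i=1$, the bound $x_i\leq 1$ is tight and blocks the upward perturbation, so no tight covering row through $i$ is forced; hence your later assertion that no coordinate of a nonintegral vertex equals $1$ is not established independently --- it is deduced from the unproven uniformity claim, making the argument circular at that point.

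The second half of the plan has a similar status. The passage from ``each tight row meets $P$ in exactly $q$ columns'' to $\C{n}{k}/N\approx\C{n'}{k'}$ is matched against Theorem~\ref{minorsgrafo} without actually constructing the dicycles in $G(\C{n}{k})$ that $N$ must induce, and your claim that a row meeting $P$ in more than $q$ columns ``is a dominating row'' is unjustified: domination requires the row's support (after deleting the columns of $N$) to \emph{contain} the support of another row, not merely to have larger cardinality, so this needs the interval structure of the rows to be exploited explicitly. By contrast, the final step is sound: granting the minor structure, the face $\setof{x\in\Q{\C{n}{k}}}{x_i=0,\ i\in N}$ is indeed a copy of $\Q{\C{n'}{k'}}$ (discarded dominating rows are implied constraints), the restriction of $x^*$ is the point $\frac{1}{k'}\onevec$, and this is a vertex if and only if $\C{n'}{k'}$ is nonsingular, i.e., $\gcd(n',k')=1$ --- the same observation the paper uses for case (ii). So the skeleton is reasonable, but the two load-bearing steps (uniform value, minor identification) are missing rather than merely terse, and the proposal does not yet constitute a proof.
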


In order to express $\Q{\C{n}{k}}$ via a totally dual integral system
of linear inequalities, we use the method described below
(see, e.g., \cite[Ch.~8]{libro}). Given a polyhedral cone $K \subset \R^n$,
consider the points in the lattice $L:= K \cap \Z^n$. An
\emph{integral generating set} for $L$ is a set $H \subseteq L$ having
the property that every $x \in L$ can be written as a linear
combination $\sum_{i=1}^k \alpha_i h_i$ of some elements $h_1, \ldots,
h_k \in H$ with integral non negative coefficients $\alpha_1, \ldots,
\alpha_k$.

The method consists in adding redundant inequalities to the
original system $\sys_0$ until the following property is verified: If
$\setof{a_i^T x \geq b_i}{i \in I(x^*)}$ is the set of linear
inequalities satisfied with equality by a vertex $x^* \in
\Q{\C{n}{k}}$ and $K(x^*)$ is the cone generated by the set of vectors 
$H(x^*):= \setof{a_i}{i \in I(x^*)}$, then $H(x^*)$ is
an integral generating set for $K(x^*) \cap \Z^n.$

This idea leads to the following procedure for computing $\Qp{\C{n}{k}}$:
\begin{enumerate}
\renewcommand{\theenumi}{\arabic{enumi}}
\renewcommand{\labelenumi}{\theenumi.}
\renewcommand{\theenumii}{\arabic{enumii}}
\renewcommand{\labelenumii}{\theenumi.\theenumii}
\item Let $\sys:= \sys_0$. 
\item For every vertex $x^*$ of $\Q{\C{n}{k}}$ do
\begin{enumerate}
\item Compute an integral generating set $H(x^*)$ of
$K(x^*) \cap \Z^n$.
\item \label{proc:add-ineq}For all $a_i \in H(x^*)$, let $b_i:= a_i^T x^*$
and add the inequality $a_i^T x \geq \ceil{b_i}$ to $\sys$.
\end{enumerate}
\item Return $\sys$ as a linear description of $\Qp{\C{n}{k}}$.
\end{enumerate}

Observe that the inequality $a_i^T x \geq b_i$ at step~2.2 is valid
for $\Q{\C{n}{k}}$, since $x^*$ minimizes $a_i^T x$ over this polyhedron
for any $a_i \in K(x^*)$. Moreover, if $x^*$ is integral, then the new
inequality added to the system $\sys$ is redundant, as $b_i \in
\Z$. Therefore, new inequalities for $\Qp{\C{n}{k}}$ may only arise
from integer generating sets $H(x^*)$ related to fractional vertices belonging to
one of the two latter clases described in Lemma~\ref{th:vertices}.

Firstly, we analyze all inequalities arising from the vertex
$x^*=\frac{1}{k} \onevec$.  The point $x^*=\frac{1}{k} \onevec$ is
known to be a vertex of $\Q{\C{n}{k}}$ if and only if
$\gcd(n,k)=1$. In this case, $\setof{C^i x \geq 1}{i\in[n]}$ are the
inequalities of the original system $\sys_0$ satisfied at equality by
$x^*$. In order to find an integral generating set for $K(x^*) \cap \Z^n$ we
need the following result.

\begin{lemma}
\label{th:Cnk-sys}
Let $x \in \R^n, b \in \Z^n$ be two vectors such that $\zerovec
\leq x < \onevec$ and $\C{n}{k} x = b$, with $\gcd(n,k)=1$. Then there
exists $r \in \tabulatedset{0, 1, \ldots, k-1}$ such that $x=
\frac{r}{k} \onevec$ and $b= r \onevec$.
\end{lemma}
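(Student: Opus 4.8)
The plan is to exploit the band structure of $\C{n}{k}$ by subtracting consecutive equations of the system $\C{n}{k} x = b$. Writing out row $i$, the equation reads $b_i = \sum_{j=i}^{i+k-1} x_j$, where indices are taken modulo $n$. Subtracting the equation for row $i$ from the one for row $i+1$ causes all terms to telescope except the two endpoints, yielding
\begin{equation*}
x_{i+k} - x_i = b_{i+1} - b_i \quad \text{for every } i \in [n].
\end{equation*}
The right-hand side is an integer because $b \in \Z^n$.

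First I would combine this identity with the box constraint $\zerovec \leq x < \onevec$. Since $0 \leq x_i < 1$ and $0 \leq x_{i+k} < 1$, the difference $x_{i+k} - x_i$ lies strictly between $-1$ and $1$; being an integer, it must equal $0$. Hence $x_{i+k} = x_i$ for all $i \in [n]$, which also forces $b_{i+1} = b_i$, so $b$ is a constant vector.

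Next I would invoke the coprimality hypothesis. The relations $x_i = x_{i+k}$ say that $x$ is constant along each orbit of the shift map $i \mapsto i+k$ on $[n]$. Because $\gcd(n,k) = 1$, this map is a single $n$-cycle, so there is only one orbit, namely all of $[n]$. Therefore every coordinate of $x$ is equal to a common value $\lambda := x_1$, i.e. $x = \lambda \onevec$ with $\lambda \in [0,1)$. Substituting back into any row equation gives $b_i = k\lambda$ for all $i$; setting $r := k\lambda$, this is a nonnegative integer (it equals the common value of the integer vector $b$) satisfying $0 \leq r < k$, hence $r \in \tabulatedset{0, 1, \ldots, k-1}$, and $x = \frac{r}{k}\onevec$, $b = r\onevec$ as claimed.

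I do not anticipate a genuine obstacle here: the argument is entirely elementary once the differencing trick is in place. The only point requiring a moment of care is the transition from "$x$ is constant on each shift-orbit" to "$x$ is globally constant," which is exactly where $\gcd(n,k) = 1$ enters and cannot be omitted — without coprimality the shift decomposes $[n]$ into several orbits and $x$ could take different (rational) values on each.
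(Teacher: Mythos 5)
Your proof is correct, and its first half is exactly the paper's argument: subtract consecutive rows of $\C{n}{k}x = b$ to get $x_{i+k} - x_i = b_{i+1} - b_i$, observe this integer lies strictly between $-1$ and $1$, and conclude $b$ is a constant vector $r\onevec$. Where you diverge is in how $\gcd(n,k)=1$ is used to pin down $x$. The paper argues linear-algebraically: since $\C{n}{k}\onevec = k\onevec$, one can write $b = \C{n}{k}\bigl(\frac{r}{k}\onevec\bigr)$, and since $\gcd(n,k)=1$ makes $\C{n}{k}$ invertible, $x = \frac{r}{k}\onevec$ is the unique solution. You instead note that $x_{i+k} = x_i$ means $x$ is constant on the orbits of the shift $i \mapsto i+k$ on $[n]$, and coprimality makes this shift a single $n$-cycle, forcing $x$ to be globally constant. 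Your route is more elementary and self-contained: the paper's step quietly relies on the nontrivial fact that $\C{n}{k}$ is nonsingular precisely when $\gcd(n,k)=1$ (an eigenvalue or determinant computation for circulants), whereas your orbit argument needs only that $k$ generates the additive group $[n]$. The trade-off is that the invertibility argument delivers uniqueness of the solution in one stroke and reuses a standard circulant fact, while yours extracts the same conclusion directly from the difference relations already in hand; both are valid, and both localize the role of the coprimality hypothesis correctly.
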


\begin{proof}
Observe that $x^*$ is the solution of the linear system 
$$
C^i x = \sum_{j=i}^{i + k-1}x_j = b_i, \quad \forall 1 \leq i \leq n.
$$ 
Subtracting each equation from the previous one, and the first from
the last, we obtain $x_i - x_{i + k} = b_i - b_{i + 1}$,
for $1 \leq i \leq n$. Moreover, from $\zerovec \leq x < \onevec$ it
follows $-1 < x_i - x_{i + k} < 1$ and, since $b$ is integral, we
must have $x_i - x_{i + k} = b_i - b_{i + 1} = 0$. As a
consequence, $b = r \onevec$ for some $r \in \Z$.

On the other hand, as $\C{n}{k} \onevec = k \onevec$, we can write $b
= \C{n}{k} (\frac{r}{k} \onevec)$ and substitute this expression in
the original system. Since $\gcd(n,k)=1$, the matrix $\C{n}{k}$ is
invertible and we obtain $x= \frac{r}{k} \onevec$. Finally, from
$\zerovec \leq x < \onevec$ we have $r \in \tabulatedset{0, 1, \ldots,
  k-1}$.
\end{proof}

With this result we can compute an integral generating set for $K(x^*)$.

\begin{theorem}\label{1sobrek}
Let $\C{n}{k}$ be a circulant matrix such that $\gcd(n,k)=1$ and
consider the vertex $x^*=\frac{1}{k} \onevec$ of $\Q{\C{n}{k}}$. Then
an integral generating set for $K(x^*) \cap \Z^n$ is given by $H(x^*)=
\tabulatedset{ C^1, C^2, \ldots, C^n, \onevec}.$
\end{theorem}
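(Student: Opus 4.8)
The plan is to verify directly the two defining properties of an integral generating set: that every listed vector lies in $K(x^*)\cap\Z^n$, and that every integer point of the cone is a non-negative integer combination of them. Since every covering constraint $C^i x\geq 1$ is tight at $x^*=\frac1k\onevec$ while no bound constraint is (as $2\le k$ forces $0<\frac1k<1$), the relevant cone is $K(x^*)=\cone\tabulatedset{C^1,\dots,C^n}$. The identity $\sum_{i=1}^n C^i=k\,\onevec$, which holds because each coordinate is covered by exactly $k$ rows, immediately gives $\onevec=\frac1k\sum_i C^i\in K(x^*)\cap\Z^n$; hence the proposed set $H(x^*)=\tabulatedset{C^1,\dots,C^n,\onevec}$ is contained in $K(x^*)\cap\Z^n$ and the first property holds. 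Because $\gcd(n,k)=1$, the matrix $\C{n}{k}$ is invertible, so each $z\in K(x^*)$ has a \emph{unique} representation $z=\sum_i\lambda_i C^i$ with $\lambda=(\C{n}{k})^{-1}z$, and $z\in K(x^*)$ holds exactly when $\lambda\ge\zerovec$.

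Now I would fix $z\in K(x^*)\cap\Z^n$ together with its coefficient vector $\lambda$. Substituting $\onevec=\frac1k\sum_i C^i$ into a prospective representation $z=\sum_i\mu_i C^i+\nu\,\onevec$ and invoking uniqueness forces $\mu_i=\lambda_i-\nu/k$; thus the whole question reduces to producing an integer $\nu\ge 0$ for which every $\lambda_i-\nu/k$ is a non-negative integer. First I would show that all the $\lambda_i$ share a common fractional part: subtracting consecutive equations of $\C{n}{k}\lambda=z$ yields $\lambda_i-\lambda_{i+k}=z_i-z_{i+1}\in\Z$, and since $\gcd(n,k)=1$ the shift $i\mapsto i+k$ runs through all of $[n]$ in a single cycle, so any two $\lambda_i$ differ by an integer. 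Writing $\lambda_i=m_i+f$ with $m_i\in\Z$ and $f\in[0,1)$, the constraint $\lambda\ge\zerovec$ then gives $m_i\ge 0$.

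The crux is to pin down this common fractional part $f$. Reducing the first equation $\lambda_1+\dots+\lambda_k=z_1$ modulo $1$ gives $kf\equiv 0\pmod 1$, whence $kf\in\Z$ and therefore $f=r/k$ for a unique $r\in\tabulatedset{0,1,\dots,k-1}$. Taking $\nu:=r\ge 0$ then makes each $\mu_i=\lambda_i-r/k=m_i$ a non-negative integer, and the resulting identity $z=\sum_i m_i C^i+r\,\onevec$ is precisely the desired non-negative integer combination, which completes the argument. I expect the common-fractional-part claim and the short modular computation isolating $f=r/k$ to be the only genuinely substantive points; both rely on $\gcd(n,k)=1$, so the main thing to watch is that this hypothesis is invoked exactly where the index shift must form a single cycle and where the row $C^1$ sums $k$ of the coordinates to an integer.
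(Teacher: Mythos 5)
Your proposal is correct and follows essentially the same route as the paper: your telescoping step ($\lambda_i-\lambda_{i+k}=z_i-z_{i+1}\in\Z$, with $\gcd(n,k)=1$ forcing a common fractional part that one row equation pins down to $r/k$) is precisely the content of the paper's Lemma~\ref{th:Cnk-sys}, applied there to the fractional part $d-\floor{d}$ of the coefficient vector, and your final identity $z=\sum_i m_i C^i + r\onevec$ matches the paper's $b=r\onevec+\C{n}{k}\floor{d}$. One cosmetic slip worth noting: $z=\sum_i \lambda_i C^i$ means $z=(\C{n}{k})^T\lambda$ rather than $\C{n}{k}\lambda=z$, so your index bookkeeping is off by a relabeling --- the paper sidesteps this by observing that the rows and columns of a circulant matrix coincide as unordered sets --- but since the transpose is again a matrix of $k$ consecutive ones, your argument is unaffected.
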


\begin{proof}
Let $b \in K(x^*) \cap \Z^n$. Then there exists a non negative
vector $\hat{d} \in \R_{+}^n$ such that $b^T = \hat{d}^T \C{n}{k}$. 
Moreover, as the (unordered) sets of row and column vectors
of a circulant matrix are equal, this is equivalent to saying that
there exists $d \in \R_{+}^n$ with $\C{n}{k}d = b$. Let
$x= d - \floor{d}$. We have $\zerovec \leq x < \onevec$ and 
$$
\C{n}{k}(x + \floor{d}) = b \quad \Leftrightarrow \quad
\C{n}{k} x = b - \C{n}{k} \floor{d}.
$$

Since $b - \C{n}{k} \floor{d}$ is integral, applying
Lemma~\ref{th:Cnk-sys} yields $\C{n}{k} x= r \onevec$,
for some $r \in \tabulatedset{0, 1, \ldots, k-1}$. Hence,
$$
b = r \onevec + \C{n}{k} \floor{d}
$$ 
and thus $b$ can be written as a linear combination
of the elements of $H(x^*)$ where all coefficients are
non negative integers.
\end{proof}

When applying step~2.2. of the procedure described at the beginning of this section,
the vectors $C^i$ yield the inequalities $(C^i)^T x \geq 1$ from $\sys_0$,
while for the last vector we obtain the rank constraint of
$\Qi{\C{n}{k}}$:

\begin{corollary}
If $\gcd(n,k)=1$, then the inequality 
$\onevec^T x \geq \ceil{\onevec^T x^*} = \ceil{\frac{n}{k}}$ 
is valid for $\Qp{\C{n}{k}}$.
\end{corollary}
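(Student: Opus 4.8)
The plan is to obtain the Corollary as an immediate consequence of Theorem~\ref{1sobrek} together with a single Chv\'atal-Gomory rounding step applied to the all-ones vector. First I would record that $\onevec$ belongs to the integral generating set $H(x^*) = \tabulatedset{C^1, \ldots, C^n, \onevec}$ furnished by Theorem~\ref{1sobrek}, and in particular that $\onevec \in K(x^*)$, the cone generated by $H(x^*)$. As observed in the discussion of the procedure at the start of this section, for any $a \in K(x^*)$ the vertex $x^*$ minimizes $a^T x$ over $\Q{\C{n}{k}}$; specializing to $a = \onevec$ shows that the (not yet rounded) inequality $\onevec^T x \geq \onevec^T x^*$ is valid for $\Q{\C{n}{k}}$.

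Next I would make the right-hand side explicit. Since $x^* = \frac{1}{k}\onevec$, I compute $\onevec^T x^* = \frac{1}{k}\,\onevec^T\onevec = \frac{n}{k}$, so the valid inequality reads $\onevec^T x \geq \frac{n}{k}$. Because $\onevec$ is an integral vector, the ($\geq$-form of) Chv\'atal-Gomory rounding applies and permits rounding the right-hand side \emph{up} to the nearest integer. This yields $\onevec^T x \geq \ceil{\frac{n}{k}}$, which by the definition of the first Chv\'atal closure is valid for $\Qp{\C{n}{k}}$, establishing the claim and identifying the resulting cut as precisely the rank constraint of $\Qi{\C{n}{k}}$.

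I do not expect any substantive obstacle here: the entire content of the statement is already carried by Theorem~\ref{1sobrek}, whose nontrivial part is the verification that $\onevec$ lies in an integral generating set for $K(x^*) \cap \Z^n$. The only points meriting care are bookkeeping ones, namely that Chv\'atal rounding for $\geq$-inequalities rounds the right-hand side upward (the $\leq$-version in the Introduction rounds downward), and that the generator $\onevec$ is the unique element of $H(x^*)$ producing a genuinely new inequality, since each $C^i$ merely reproduces the boolean facet $(C^i)^T x \geq 1$ already present in $\sys_0$.
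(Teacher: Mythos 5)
Your proposal is correct and follows exactly the paper's route: the corollary is obtained by applying step~2.2 of the procedure to the generator $\onevec \in H(x^*)$ from Theorem~\ref{1sobrek}, with validity of the unrounded inequality $\onevec^T x \geq \onevec^T x^* = \frac{n}{k}$ justified by the paper's observation that $x^*$ minimizes $a^T x$ over $\Q{\C{n}{k}}$ for any $a \in K(x^*)$, followed by Chv\'atal--Gomory rounding of the right-hand side upward. Your added bookkeeping remarks (the $\geq$-form of rounding, and that the generators $C^i$ only reproduce boolean facets) match the paper's own commentary, so there is nothing to flag.
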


On the other hand, given a vertex $x^*$ corresponding to a circulant
minor $\C{n'}{k'}$, the task of finding an integral generating set for
$K(x^*) \cap \Z^n$ turns out to be more complicated. We present here
a partial result which is however sufficient for deriving a new class
of facet-defining inequalities for $\Qi{\C{n}{k}}$.

Consider a circulant minor $\C{n'}{k'} \approx \C{n}{k}/N$ of
$\C{n}{k}$, and let $x^*$ be the corresponding vertex of
$\Q{\C{n}{k}}$, defined as in Lemma~\ref{th:vertices}(iii). From
Lemma~2.1 and Lemma~2.4 in \cite{Nes} it follows that $\C{n'}{k'}$ can be
obtained from  $\C{n}{k}$ by deleting each column $j$ with $j \in N$
and each row $i$ with $i+1 \in N$.
Hence, $\card{C^i \setminus N}= k'$ holds for $i+1 \not\in N$ and 
$x^*$ satisfies the following inequalities with equality:
\begin{align}
&(C^i)^T x \geq 1, \mbox{ for all } i \mbox{ such that } i+1 \not\in N; \label{eq:cnk}\\ 
& (e^j)^T x \geq 0, \mbox{ for all } j \in N. \label{eq:nonneg}
\end{align}
Observe that there might be other inequalities from $\sys_0$ satisfied tightly
by $x^*$. In the following we denote by $K$ the subcone of $K(x^*)$
spanned by the normal vectors of the left-hand sides of \eqref{eq:cnk}
and \eqref{eq:nonneg}.
\begin{theorem}
\label{th:minors-gs}
Let $x^*$ be a vertex of $\Q{\C{n}{k}}$ associated with a minor $\C{n}{k}/ N \approx \C{n'}{k'}$,
and let $W=\setof{i\in N}{i-k-1 \in N}$. An integral generating set for $K \cap \Z^n$ is given by
$$
\setof{C^i}{i+1 \not\in N} \cup \setof{e^j}{j \in N} \cup
\setof{r\onevec + \sum_{j \in W} e^j}{1 \leq r \leq k'-1}.
$$
\end{theorem}

\begin{proof}
Let $A$ be the square coefficient matrix of the system
\eqref{eq:cnk}-\eqref{eq:nonneg}. Reordering the columns, we may
assume that $A$ has the following block form:
$$
A= \left(
\begin{array}{c|c|c}
B^1 & B^2 & C \\
\hline
I_1 & \OM & \OM \\
\hline
\OM & I_2 & \OM
\end{array}
\right),
$$ 
where the first $n_3:= \card{W}$ columns correspond to indices in
$W$, the next $n_2:= \card{N - W}$ columns correspond to indices in
$N-W$ and the last $n'$ columns correspond to indices not in $N$.
Moreover, $C:=\C{n'}{k'}$, and $I_1, I_2, \OM$ are identity 
and zero matrices of the appropriate sizes.

Let $b \in K \cap \Z^n$. Then there exist $d \in \R_{+}^{n'}$,
$f \in \R_{+}^{n_3}$, and $g \in \R_{+}^{n_2}$ with $b^T=(d^T,
f^T, g^T) A$. Now consider the vectors $x:= d - \floor{d}$,
$y:= f - \floor{f}$, $z:= g - \floor{g}$, and define:
\begin{equation}
\label{eq:btilde}
\tilde{b}^T:= b^T - \left(\floor{d^T}, \floor{f^T}, \floor{g^T}\right) A.
\end{equation}
Observe that $\tilde{b} \in \Z^n$ and $\tilde{b}^T=(x^TB^1 + y^T,
x^TB^2 + z^T, x^TC)$. From Lemma~\ref{th:Cnk-sys} it follows
that $x^TC = r \onevec^T$ and
$x= \frac{r}{k'} \onevec$, for some $r \in \tabulatedset{0, \ldots,
k'-1}$. Moreover, if $r=0$ then we must have $\tilde{b}=0$
and from \eqref{eq:btilde} we conclude that $b^T$ is an
integral conic combination of the row vectors of $A$,
which finishes the proof.

Now assume $r \in \tabulatedset{1, \ldots, k'-1}$. For $\ell \in
\tabulatedset{1, 2}$, we have $x^TB^\ell = \frac{r}{k'} {\onevec}^T
B^\ell$. From the proof of Theorem~3 in \cite{BNTDAM13}, it follows 
that each column of $B^1$ has support equal to $k' + 1$ and
each column of $B^2$ has support equal to $k'$. Thus,
$$ 
\tilde{b}^T=\left(\frac{r(k'+1)}{k'} \onevec^T + y^T, r \onevec^T + z^T, r \onevec^T\right).
$$ 

Finally, since $\tilde{b}^T \in \Z^n$, $\zerovec \leq y <
\onevec$, and $\zerovec \leq z < \onevec$, we must have
$y=(1-\frac{r}{k'}) \onevec$, $z= \zerovec$, and hence
$\tilde{b}^T=((r+1) \onevec^T, r \onevec^T, r \onevec^T) = r \onevec^T
+ \sum_{j \in W} (e^j)^T$. Then the statement of the theorem follows
from \eqref{eq:btilde}.
\end{proof}

Vectors in the first two sets of last theorem give rise to boolean inequalities
after applying step~2.2. of the procedure from the beginning of this section. 
For the third set, we have $\ceil{r \onevec^T x^* + \sum_{j \in W} (e^j)^T x^*} = 
\ceil{\frac{rn'}{k'}}$, and hence we obtain:

\begin{corollary}
Let $N\subset [n]$ be such that $\C{n}{k}/N \approx \C{n'}{k'}$ and $W=\{i\in N : i-k-1 \in N \}$.
The inequalities 
\begin{equation}
\label{eq:r-minors}
\sum_{i \in W} (r+1) x_i + \sum_{i \not\in W} r x_i \geq
\ceil{\frac{rn'}{k'}},
\end{equation}
with $r \in \tabulatedset{1, \ldots, k'-1}$, are valid for $\Qp{\C{n}{k}}$.
\end{corollary}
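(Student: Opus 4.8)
The plan is to apply the general procedure from the beginning of this section directly to the integral generating set furnished by Theorem~\ref{th:minors-gs}, so that the corollary emerges as the specialization of step~2.2 to the third family of generators.

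First I would record the key observation that $K$ is a subcone of $K(x^*)$. Consequently, for every $a \in K$, writing $a$ as a nonnegative combination $a = \sum_i \lambda_i a_i$ of the normal vectors $a_i$ of the inequalities $a_i^T x \geq b_i$ active at $x^*$, we obtain $a^T x = \sum_i \lambda_i a_i^T x \geq \sum_i \lambda_i b_i = \sum_i \lambda_i a_i^T x^* = a^T x^*$ for every $x \in \Q{\C{n}{k}}$. Hence $x^*$ minimizes $a^T x$ over $\Q{\C{n}{k}}$, and $a^T x \geq a^T x^*$ is valid for this polyhedron. Since each generator listed in Theorem~\ref{th:minors-gs} is an integral vector, Chv\'atal-Gomory rounding then yields the validity of $a^T x \geq \ceil{a^T x^*}$ for $\Qp{\C{n}{k}}$.

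Next I would specialize to the third family of generators, $a = r\onevec + \sum_{j \in W} e^j$ with $1 \leq r \leq k'-1$. Expanding the left-hand side and splitting the sum according to whether an index belongs to $W$ gives $a^T x = r \sum_{i \notin W} x_i + (r+1) \sum_{i \in W} x_i$, which is exactly the left-hand side of \eqref{eq:r-minors}. For the right-hand side I would invoke the description of $x^*$ from Lemma~\ref{th:vertices}(iii): since $x^*_i = \frac{1}{k'}$ for $i \notin N$, $x^*_i = 0$ for $i \in N$, and $W \subseteq N$, it follows that $a^T x^* = r \cdot \frac{n'}{k'} + \sum_{j \in W} x^*_j = \frac{rn'}{k'}$, so that rounding up produces the claimed right-hand side $\ceil{\frac{rn'}{k'}}$.

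I do not expect a genuine obstacle here, since all the structural work was carried out in Theorem~\ref{th:minors-gs}. The only points requiring care are the inclusion $K \subseteq K(x^*)$, which is what guarantees that the generators actually induce inequalities valid for $\Q{\C{n}{k}}$ (and hence, after rounding, for $\Qp{\C{n}{k}}$), together with the routine evaluation of $a^T x^*$.
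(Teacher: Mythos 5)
Your proposal is correct and follows essentially the same route as the paper: the corollary is obtained there precisely by applying step~2.2 of the procedure to the third family of generators from Theorem~\ref{th:minors-gs}, noting (as the paper does when justifying the procedure) that $x^*$ minimizes $a^T x$ over $\Q{\C{n}{k}}$ for $a \in K(x^*)$, and computing $a^T x^* = \frac{rn'}{k'}$ since $x^*$ vanishes on $N \supseteq W$. Your explicit verification of the inclusion $K \subseteq K(x^*)$ and of the minimization argument merely spells out details the paper leaves implicit.
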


For $r=1$, these inequalities are the minor inequalities described in
\cite{BNTDAM13}.  Accordingly, we have called \eqref{eq:r-minors} as
\emph{generalized $r$-minor inequalities}.  In some cases, generalized $r$-minor 
inequalities with $r> 1$ can be obtained from the addition of (classical) minor
inequalities and the rank constraint, and are thus redundant for
$\Qp{\C{n}{k}}$. 
However, this is not true in general. For instance, consider $W:=
\setof{6+5k}{0 \leq k \leq 10}$ and $N:= W \cup
\tabulatedset{1}\subset [59]$.  One can verify that $\C{59}{4}/N
\approx\C{47}{3}$ and the corresponding inequality \eqref{eq:r-minors}
for $r=2$ has the form $\sum_{i \in W} 3x_i + \sum_{i \not\in W} 2x_i
\geq 32$. Moreover, it can be shown that this inequality defines a facet of
$\Qi{\C{59}{4}}$. As a consequence we have the following result.

\begin{theorem}
There are circulant matrices $\C{n}{k}$ for which minor inequalities,
boolean facets, and the rank constraint are not enough to describe
$\Qp{\C{n}{k}}$.
\end{theorem}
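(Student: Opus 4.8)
The plan is to establish this existence result by a single explicit witness: I would take the circulant matrix $\C{59}{4}$ together with the generalized $2$-minor inequality introduced in the paragraph preceding the statement, and show that it is a facet-defining inequality of $\Qp{\C{59}{4}}$ which is not a positive multiple of any inequality in the system $\sys'$ consisting of minor inequalities, boolean facets, and the rank constraint. Since, for a full-dimensional polyhedron, every facet-defining inequality coincides up to a positive scalar with some inequality occurring in any of its linear descriptions, exhibiting one facet of $\Qp{\C{59}{4}}$ that is absent from $\sys'$ proves that $\sys'$ does not describe $\Qp{\C{59}{4}}$.

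First I would fix the combinatorial data of the example. Using Theorem~\ref{minorsgrafo}, I would verify that $N = W \cup \tabulatedset{1}$ with $W = \setof{6+5k}{0 \leq k \leq 10}$ induces the required disjoint simple dicycles in $G(\C{59}{4})$, so that indeed $\C{59}{4}/N \approx \C{47}{3}$ with $n'=47$ and $k'=3$. In particular $r=2$ is admissible since $2 \in \tabulatedset{1,\ldots,k'-1}$, and the set $W$ associated with $N$ is exactly the one displayed. By the corollary to Theorem~\ref{th:minors-gs}, the inequality $\sum_{i\in W} 3x_i + \sum_{i\notin W} 2x_i \geq \ceil{2\cdot 47/3} = 32$, which is \eqref{eq:r-minors} for this choice of $N$ and $r=2$, is then valid for $\Qp{\C{59}{4}}$.

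The core of the argument is to prove that this inequality defines a facet of $\Qi{\C{59}{4}}$, and to transfer that facet to $\Qp{\C{59}{4}}$. The transfer is cheap: since $\Qi{\C{59}{4}} \subseteq \Qp{\C{59}{4}}$ and both polyhedra are full dimensional, any inequality that is valid for $\Qp{\C{59}{4}}$ and defines a facet of $\Qi{\C{59}{4}}$ must also define a facet of $\Qp{\C{59}{4}}$, because the $59$ affinely independent tight points witnessing the facet of $\Qi{\C{59}{4}}$ lie in $\Qp{\C{59}{4}}$ and span the same supporting hyperplane there. Hence I would concentrate on producing $59$ affinely independent $0,1$-covers $S$ of $\C{59}{4}$ satisfying the equality $2\card{S} + \card{S\cap W} = 32$ (the integral form of the inequality at equality), which forces $\card{S}=15$ with $\card{S\cap W}=2$, or $\card{S}=16$ with $\card{S\cap W}=0$; I would build them combinatorially from shifted minimum covers and verify their affine independence, in analogy with the facet proof underlying Lemma~\ref{nosotras}. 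I expect this to be the \emph{main obstacle}: here $n'=47 \equiv 2 \pmod{k'}$, so the hypothesis $n'\equiv 1 \pmod{k'}$ of Lemma~\ref{nosotras} fails, the $r=1$ minor inequality for this $W$ need not even be a facet, and the tight-point construction for $r=1$ cannot be reused verbatim; a fresh family of integral covers tailored to $r=2$ must be exhibited and checked.

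Finally I would close the argument by comparing coefficient patterns. The coefficient vector of the generalized $2$-minor inequality equals $3$ on $W$ and $2$ on its complement, so it has full support in $[59]$ and coefficient ratio $3:2$. No positive multiple of a boolean facet can have this form, since the boolean facets of $\Qi{\C{59}{4}}$ have support equal to a single coordinate or to a block $C^i$ of $k=4$ consecutive indices; the rank constraint $\sum_{i} x_i \geq \ceil{59/4}$ has all coefficients equal, hence ratio $1:1$; and every classical minor inequality \eqref{ecu} has coefficients in $\tabulatedset{1,2}$, hence ratio $2:1$. Consequently the facet of $\Qp{\C{59}{4}}$ produced above is not a positive scalar multiple of any inequality in $\sys'$, which contradicts the assumption that $\sys'$ describes $\Qp{\C{59}{4}}$ and therefore proves the theorem.
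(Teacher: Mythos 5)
Your proposal follows essentially the same route as the paper: the paper's proof of this theorem is precisely the witness you chose, namely the generalized $2$-minor inequality $\sum_{i \in W} 3x_i + \sum_{i \notin W} 2x_i \geq 32$ for $\C{59}{4}/N \approx \C{47}{3}$, asserted to be facet-defining for $\Qi{\C{59}{4}}$ and hence (being valid for $\Qp{\C{59}{4}}$ and not a positive multiple of any boolean facet, classical minor inequality, or the rank constraint, by exactly the coefficient-pattern comparison you give) indispensable in any description of $\Qp{\C{59}{4}}$. Note that the one step you correctly flag as the main obstacle --- exhibiting the $59$ affinely independent tight covers, which requires a fresh construction since $47 \equiv 2 \pmod 3$ puts it outside the hypotheses of Lemma~\ref{nosotras} --- is also left unproven in the paper itself (``it can be shown''), so your write-up matches the paper's level of detail while making the transfer argument from $\Qi{\C{59}{4}}$ to $\Qp{\C{59}{4}}$ and the non-redundancy check more explicit.
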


\section{Separation algorithms for generalized minor inequalities}
\label{sec:separation}

A polynomial 
time algorithm to separate 1-minor inequalities associated with particular classes of circulant minors
has been proposed by S. Bianchi et al.~in \cite{BNTDAM13}. In this section we extend some of their results 
to the case of generalized minor inequalities. More precisely, we address the separation problem 
for $r$-minor inequalities corresponding to circulant minors having parameters $d=n_1=1$.

Following the ideas presented in \cite{BNTDAM13}, let us first prove a technical lemma 
that will be required for our separation procedure. 

\begin{lemma}\label{alfa}
Let $d, n_1=1, n_2$, and $n_3$ be the parameters associated with a circulant minor of $C^k_n$ such that 
$n_3=p\, (\mathrm{mod} \,(k-d))$ 
with  $1\leq p< k-d$. 
Then
$$
\left\lceil \frac{rn'}{k'}\right\rceil=
\left\lceil r\ \frac{n-d(n_2+n_3)}{k-d}\right\rceil= \left[r\frac{n}{k}+\left(\left\lceil \frac{rp}{k-d}\right\rceil-\frac{rp}{k-d}\right)\right]+\frac{r}{k(k-d)}dn_3. $$  
\end{lemma}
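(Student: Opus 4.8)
The plan is to treat the two equalities separately: the first is a direct substitution of the minor's parameters, while the second reduces to a single congruence modulo $k-d$. For the first equality I would invoke Theorem~\ref{minorsgrafo}: since the minor has $n_1 = 1$, its parameters satisfy $k' = k - dn_1 = k-d$ and $n' = n - d(n_2+n_3)$. Substituting these directly into $\frac{rn'}{k'}$ yields $\ceil{\frac{rn'}{k'}} = \ceil{r\,\frac{n-d(n_2+n_3)}{k-d}}$, with no further work required.

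For the second equality I would first rewrite $\frac{rn'}{k'}$ so as to isolate the summand $\frac{r}{k(k-d)}dn_3$. Using the defining relation $n = n_2k + n_3(k+1) = (n_2+n_3)k + n_3$ for a dicycle with $n_1 = 1$, one obtains $n_2+n_3 = \frac{n-n_3}{k}$ and hence $n' = \frac{n(k-d)+dn_3}{k}$. Dividing by $k' = k-d$ gives the clean decomposition
\begin{equation*}
\frac{rn'}{k'} = \frac{rn}{k} + \frac{r}{k(k-d)}dn_3.
\end{equation*}
Since this last summand appears verbatim on the right-hand side of the claimed identity, the second equality is equivalent to
\begin{equation*}
\ceil{\frac{rn'}{k'}} - \frac{rn'}{k'} = \ceil{\frac{rp}{k-d}} - \frac{rp}{k-d}.
\end{equation*}

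The key observation is that $\ceil{x} - x$ depends only on the fractional part of $x$; hence the displayed identity holds as soon as $\frac{rn'}{k'}$ and $\frac{rp}{k-d}$ differ by an integer, i.e. $rn' \equiv rp \pmod{k-d}$. Establishing this congruence is the step where the combinatorial parameters enter, and I expect it to be the main (though short) obstacle. I would again use $n - n_3 = (n_2+n_3)k$ to compute
\begin{equation*}
n' - n_3 = \bigl(n - d(n_2+n_3)\bigr) - n_3 = (n_2+n_3)(k-d),
\end{equation*}
so that $n' \equiv n_3 \pmod{k-d}$. Combining this with the hypothesis $n_3 \equiv p \pmod{k-d}$ gives $n' \equiv p \pmod{k-d}$, and multiplying by $r$ yields $rn' \equiv rp \pmod{k-d}$. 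Feeding this back through the fractional-part argument closes the second equality and completes the proof. Note that this derivation is division-free, so no appeal to $\gcd$ conditions between $d$, $r$, and $k-d$ is needed.
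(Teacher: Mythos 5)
Your proof is correct, and it reorganizes the paper's argument around a different decomposition. The paper's proof introduces the explicit quotient $s=(n_3-p)/(k-d)$, uses $n=k(n_2+n_3)+n_3$ to rewrite $n-d(n_2+n_3)=(k-d)(n_2+n_3)+n_3$ and obtain $\ceil{rn'/k'}=r(n_2+n_3)+\ceil{\frac{rn_3}{k-d}}$, then pulls the integer $rs$ out of the remaining ceiling and lands on the right-hand side by term-by-term algebra. You instead establish the exact identity $\frac{rn'}{k'}=\frac{rn}{k}+\frac{r}{k(k-d)}dn_3$ up front, which reduces the lemma to the equality of the ceiling defects $\ceil{x}-x$ at $x=\frac{rn'}{k'}$ and $x=\frac{rp}{k-d}$, and you verify this via the congruence $n'\equiv n_3\equiv p \pmod{k-d}$, which follows from $n'-n_3=(k-d)(n_2+n_3)$. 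Both arguments rest on the same two ingredients --- the relation $n=k(n_2+n_3)+n_3$ forced by $n_1=1$, and the invariance of $\ceil{x}-x$ under integer shifts --- so they are mathematically equivalent and equally elementary; the paper's version never forms the clean split of $\frac{rn'}{k'}$ explicitly, while yours makes the structure of the right-hand side transparent (exact value plus ceiling defect of $\frac{rp}{k-d}$) and, as a small bonus, immediately explains the remark following the lemma: if $(k-d)\mid n_3$ then $n'\equiv 0 \pmod{k-d}$, so $k'$ divides $n'$ and the inequality is redundant. Your closing observation that no gcd hypotheses are needed is also accurate, since you only ever multiply a congruence by $r$, never divide.
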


\begin{proof} 
Let $s$ be the nonnegative integer such that $n_3=s(k-d)+p$. 
Since $n= n_1 n =k(n_2+n_3)+n_3$ we have that 
$$\left\lceil r\ \frac{n-d(n_2+n_3)}{k-d}\right\rceil= \left\lceil r\ \frac{(k-d)(n_2+n_3)+n_3}{k-d}\right\rceil= r\left(n_2+n_3\right)+\left\lceil \frac{rn_3}{k-d}\right\rceil.$$

From $s=\frac{n_3- p}{k-d}$ and $n_2+n_3=\frac{n-n_3}{k}$ it follows that 
\begin{align*}
r\left(n_2+n_3\right)+\left\lceil \frac{rn_3}{k-d}\right\rceil &= r\ \frac{n-n_3}{k} + r\ \frac{n_3-p}{k-d}+\left\lceil \frac{rp}{k-d}\right\rceil\\ 
& =\left[r\frac{n}{k}+\left(\left\lceil \frac{rp}{k-d}\right\rceil-\frac{rp}{k-d}\right)\right]+\frac{r}{k(k-d)}dn_3
\end{align*}
and the proof is complete.
\end{proof}

Observe that if $n_3$ is a multiple of $k-d$, then the corresponding $r$-minor inequality
is redundant for $\Q{\C{n}{k}}$, as the value $\frac{rn'}{k'}$ on the right-hand side is integer.
Otherwise, if $W\subset [n]$ defines a minor of $C_n^k$ with parameters $d, n_1=1, n_2$, and $n_3$, where $n_3=p\, (\mathrm{mod} \,(k-d))$ 
and  $1\leq p< k-d$, the previous lemma implies that
the corresponding $r$-minor inequality can be written as 
$$\sum_{i\in W} x_i+r\sum_{i=1}^n x_i  \geq  \alpha(d,p) +\beta(d) \left|W\right|$$ 
where 
$$\alpha(d,p)= r\frac{n}{k}+\left(\left\lceil \frac{rp}{k-d}\right\rceil-\frac{rp}{k-d}\right),\quad \beta(d)=\frac{r}{k(k-d)},$$
or, equivalently,
\begin{equation}\label{faceta}
\sum_{i\in W} (x_i - \beta(d))  \geq   \alpha(d,p)- r\sum_{i=1}^n x_i.
\end{equation}

Given $C^k_n$ and two integer numbers $d,p$ with $1\leq d\leq k-2$ and $1\leq p <k-d$,  
we define the function $c^d$ on $\R$ by $c^d(t):= t - \beta(d)$ and the function $L^{d,p}$ on $\R^n$ by $L^{d,p}(x):= \alpha(d,p)- r\sum_{i=1}^n x_i$.  

Then, inequality (\ref{faceta}) can be written as

\begin{equation}\label{faceta2}
\sum_{i\in W} c^d (x_i) \geq   L^{d,p} (x).
\end{equation}

Following the same notation introduced in \cite{BNTDAM13}, let $\mathcal{W}(d,p)$ be the family of sets $W\subset [n]$ defining minors with parameters $d, n_1=1, n_2, n_3=p\, (\mathrm{mod} \, (k-d))$. 
We are interested in the separation of generalized $r$-minor inequalities corresponding to
sets $W \in \mathcal{W}(1,p)$.

To this end, given $n,k$ 
let $K_n^k(p)=\left(V, A\right)$ be the digraph with set of nodes 
$$V=\bigcup\limits_{j\in [k-1]}V^j \cup \{t\}$$
where $V^j=\{v^j_i :i\in [n]\}$ 
and set of arcs defined as follows: first consider in $A$ the arcs  
\begin{itemize}
	\item $(v_1^1,v_l^{2})$ for all $l$ such that $k+2\leq l\leq n$ and $l=2$ (mod $k$),
\end{itemize}
then consider in a recursive way for $j\in[k-1]$:
\begin{itemize}
	\item for each $(v,v_i^{j})\in A$, add $(v_i^j,v_l^{j+1})$ whenever $l$ is such that $i+k+1\leq l\leq n$ and $l-i=1$ (mod $k$),
\end{itemize}
and finally,
\begin{itemize}
  \item for each $(v,v_i^{p})\in A$, add $(v_i^p,t)$ whenever $i$ is such that $i\leq n-k$ and $n-i=0$ (mod $k$).
\end{itemize}

Note that, by construction, $K_n^k(p)$ is acyclic. In Figure \ref{fig:caminosep} we sketch the digraph $K_{59}^4(2)$ where only the arcs corresponding to a particular $v_1^1t$-path are drawn.

\begin{figure}[h]
	\centering
		\includegraphics[width=.32\textwidth]{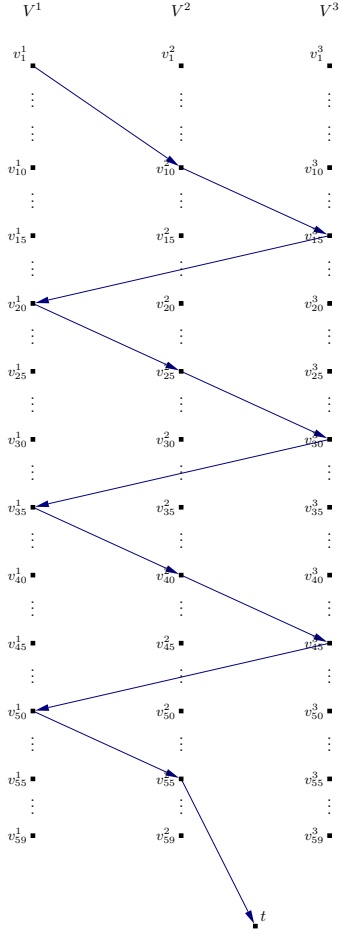}
	\caption{A $v_1^1t$-path in the digraph $K_{59}^4(2)$.}
	\label{fig:caminosep}
\end{figure}

For the proof of the next lemma, we need the following observation.
\begin{remark}\label{condW} 
Let $W=\{w_i: i\in [n_3]\}$ with 
$1\leq w_1< \dots< w_{n_3}\leq n$, then $W$ defines a circulant minor with parameters $d=n_1=1$ if and only if $w_{i+1}-w_{i}=1$ (mod $k$) and $w_{i+1}-w_{i}\geq k+1$, for all $i\in [n_3]$.
\end{remark}

Now we can state our result.

\begin{lemma}
There is a one-to-one correspondence between $v_1^1 t$-paths in $K_n^k(p)$ and subsets $W\in \mathcal{W}(1,p)$ with $1\in W$.
\end{lemma}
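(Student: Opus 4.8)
The plan is to establish the bijection by showing that each $v_1^1 t$-path in $K_n^k(p)$ encodes exactly the data needed to reconstruct a set $W \in \mathcal{W}(1,p)$ containing $1$, and conversely. The key bridge is Remark~\ref{condW}: a set $W = \{w_1 < \dots < w_{n_3}\}$ with $1 = w_1$ lies in $\mathcal{W}(1,p)$ precisely when every consecutive gap satisfies $w_{i+1} - w_i \equiv 1 \pmod{k}$ and $w_{i+1} - w_i \geq k+1$, and when the total number of elements $n_3$ satisfies $n_3 \equiv p \pmod{(k-d)} = p \pmod{(k-1)}$. So the combinatorial content I must match is: an ordered sequence of indices in $[n]$ whose consecutive differences obey the two local conditions, wrapping around correctly, with exactly $n_3$ elements where $n_3 \equiv p$.

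First I would read off the correspondence in the forward direction. A $v_1^1 t$-path visits nodes $v_1^1, v_{i_2}^{2}, v_{i_3}^{3}, \ldots$ through the layers $V^1, V^2, \ldots$ before reaching $t$; I would define $W$ to be the set of node-indices $\{1, i_2, i_3, \ldots\}$ traversed. The arc conditions are designed to enforce exactly Remark~\ref{condW}'s constraints: the initial arcs $(v_1^1, v_l^2)$ require $l \equiv 2 \pmod k$ and $k+2 \leq l$, i.e. the first gap $w_2 - w_1 = l - 1 \equiv 1 \pmod k$ and $w_2 - w_1 \geq k+1$; the recursive arcs $(v_i^j, v_l^{j+1})$ with $l - i \equiv 1 \pmod k$ and $l \geq i+k+1$ enforce the same gap condition on every subsequent pair; and the layer index $j$ acts as a counter, so arriving at layer $p$ and then taking the arc into $t$ (which demands $n - i \equiv 0 \pmod k$ and $i \leq n-k$) forces both that the path has length matching $n_3 \equiv p \pmod{(k-1)}$ and that the final wrap-around gap $w_1 - w_{n_3} = n + 1 - w_{n_3}$ again closes up correctly modulo $k$. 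I would verify each of these translations explicitly, checking that the cyclic closure condition (the gap from the last element back to $w_1 = 1$) is exactly what the arc into $t$ guarantees.

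For the converse, given $W \in \mathcal{W}(1,p)$ with $1 \in W$, I would order its elements and assign the $i$-th element to layer $(i-1 \bmod (k-1)) + 1$ or, more carefully, track how the layer index advances; the point is that Remark~\ref{condW} guarantees every consecutive pair satisfies precisely the arc conditions, so the sequence of nodes is a legitimate path, and the congruence $n_3 \equiv p$ ensures it terminates in layer $p$ so that the final arc into $t$ exists. Injectivity and surjectivity then follow because the path is determined by its node-indices and vice versa. The main obstacle I anticipate is the bookkeeping around the layer counter and the modular closure: I must confirm that the number of layers traversed corresponds correctly to $n_3 \pmod{(k-1)}$ rather than to $n_3$ itself (since layers cycle through $[k-1]$), and that the terminal arc's condition $n - i \equiv 0 \pmod k$ is equivalent to the wrap-around gap condition $w_1 - w_{n_3} \equiv 1 \pmod k$ combined with $w_{n_3} \leq n - k$. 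Reconciling these two descriptions of the closing gap — one phrased in $K_n^k(p)$'s arc definition, the other in Remark~\ref{condW} — is where the argument needs the most care, and I would handle it by writing the cyclic gap as $(n - w_{n_3}) + w_1 = (n - i) + 1$ and checking the congruence directly.
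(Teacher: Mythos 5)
Your proposal is correct and follows essentially the same route as the paper's proof: both use Remark~\ref{condW} as the bridge, translate the arc conditions of $K_n^k(p)$ into the consecutive-gap conditions (with the terminal arc into $t$ encoding the cyclic wrap-around gap $n+1-w_{n_3}$), and use the cycling layer index to certify $n_3 \equiv p \pmod{(k-1)}$, exactly as the paper does via the assignment $\ell = j \pmod{(k-1)}$ and the layer count $|V(P)-\{t\}| = s(k-1)+p$. If anything, your treatment of the closing gap at $t$ is more explicit than the paper's, which leaves that verification implicit.
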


\begin{proof}
 
Let $W\in \mathcal{W}(1,p)$ and assume that $W=\{i_j: j\in [n_3]\}\subset [n]$ with $1=i_1<i_2<\ldots<i_{n_3}\leq n$. 

Then, by Remark \ref{condW}, $i_{j+1}- i_j= 1$ (mod $k$) and $i_{j+1}- i_j\geq k+1$ for all $j\in [n_3]$. 
Thus, the set of nodes $P \cup \left\{t\right\}$ with
$$P:=\left\{v_{i_j}^\ell\,:\, i_j\in W ,\,\, \ell=j\, (\mathrm{mod} \, (k-1)) \right\}$$
induces a $v_1^1 t$-path in $K_n^k(p)$.

Conversely, let $P$ be a $v_1^1 t$-path in $K_n^k(p)$. By construction, there exists a 
nonnegative integer $s$ such that 
$|V(P)\cap V^j|= s+1$ for all $j\in \{1, \ldots, p\}$ and $|V(P)\cap V^j|= s$ for $j \in \{p+1, \ldots, k-1\}$. 
Then, $|V(P)-\{t\}|=s(k-1)+p$.

Now, if we define 
$$W= \setof{i \in [n]}{v_i^j \in V(P) \mbox{ for some } j \in [k-1]}$$ 
we have $|W|=s(k-1)+p$ and from Remark \ref{condW} it follows that $W\in \mathcal{W}(1,p)$. 
\end{proof}

\begin{theorem}\label{sep}
Given $C^k_n$ and $r \in \{ 1, \ldots, k-1\}$, the separation problem for $r$-minor inequalities corresponding to minors with parameters $d=n_1=1, n2,$ and $n_3=p\, (\mathrm{mod} \,(k-1))$ can be polynomially reduced to at most $n$ shortest path problems in an acyclic digraph.
\end{theorem}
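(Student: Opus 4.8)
The plan is to fix the residue $p$ and reduce the search for a violated $r$-minor inequality over $\mathcal{W}(1,p)$ to a family of shortest-path computations. The starting observation is that, by Lemma~\ref{alfa}, every $W \in \mathcal{W}(1,p)$ gives rise to an inequality of the form \eqref{faceta2}, namely $\sum_{i \in W} c^1(x_i) \geq L^{1,p}(x)$, whose right-hand side $L^{1,p}(x^*) = \alpha(1,p) - r\sum_{i=1}^n x_i^*$ is a constant once $p$ and $x^*$ are fixed. Note that the dependence of the bound $\ceil{rn'/k'} = \alpha(1,p) + \beta(1)|W|$ on the (varying) cardinality $|W| = n_3$ has been absorbed into the left-hand side through the function $c^1(t) = t - \beta(1)$. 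Consequently, $x^*$ violates some inequality of this subfamily if and only if $\min\setof{\sum_{i \in W} c^1(x_i^*)}{W \in \mathcal{W}(1,p)} < L^{1,p}(x^*)$, so the separation problem is exactly the minimization of the linear objective $\sum_{i \in W} c^1(x_i^*)$ over all admissible index sets $W$.

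Next I would invoke the one-to-one correspondence of the previous lemma to encode those $W \in \mathcal{W}(1,p)$ with $1 \in W$ as $v_1^1 t$-paths in the acyclic digraph $K_n^k(p)$. Along such a path the visited nodes $v_i^j$ are in bijection with the elements $i \in W$, so assigning to each arc $(v_i^j, v_l^{j+1})$ the weight $c^1(x_l^*)$ of its head, and weight $0$ to the arcs entering $t$, makes the total weight of the path equal to $\sum_{i \in W} c^1(x_i^*) - c^1(x_1^*)$. Adding the constant $c^1(x_1^*)$ contributed by the fixed source node, a shortest $v_1^1 t$-path yields precisely $\min\setof{\sum_{i \in W} c^1(x_i^*)}{W \in \mathcal{W}(1,p),\ 1 \in W}$. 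Because $K_n^k(p)$ is acyclic and has $O(nk)$ nodes, this shortest path is computable in time linear in the number of arcs even in the presence of negative weights, via a single pass in topological order.

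To cover all of $\mathcal{W}(1,p)$, and not only the sets containing the index $1$, I would exploit the cyclic symmetry of the family: $\mathcal{W}(1,p)$ is invariant under the shift $i \mapsto i + \sigma \pmod n$, and this shift transforms $\sum_{i \in W} c^1(x_{i+\sigma}^*)$ into $\sum_{i' \in W + \sigma} c^1(x_{i'}^*)$. Running the shortest-path computation on each of the $n$ shifted cost vectors $\tilde{x}_i^{(\sigma)} := x_{i+\sigma}^*$, for $\sigma \in \tabulatedset{0,\ldots,n-1}$, and taking the smallest value therefore returns the global minimum $\min\setof{\sum_{i \in W} c^1(x_i^*)}{W \in \mathcal{W}(1,p)}$; comparing it with $L^{1,p}(x^*)$ decides separation. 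This establishes the claimed reduction to at most $n$ shortest-path problems in an acyclic digraph.

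The main obstacle I anticipate lies in justifying this last step, since the digraph directly enumerates only sets whose smallest element in the linear order is $1$, whereas the cyclic shift mixes that normalization with the modular gap conditions of Remark~\ref{condW}. The verification amounts to observing that the cyclic gaps of any $W \in \mathcal{W}(1,p)$ sum to $n$ and each exceeds $k$; hence, unrolling $W$ from any chosen element $a$ and shifting by $\sigma = a - 1$ produces a strictly increasing sequence contained in $\tabulatedset{1,\ldots,n}$ whose successive differences meet the arc conditions defining $K_n^k(p)$, with the terminal gap satisfying the conditions $i \leq n-k$ and $n - i \equiv 0 \pmod k$ of the arcs into $t$. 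This guarantees that every admissible set is realized by at least one shift together with a genuine $v_1^1 t$-path, so no violated inequality is missed.
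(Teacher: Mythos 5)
Your proposal is correct and follows essentially the same route as the paper: the same digraph $K_n^k(p)$ with head-node costs $c^1(\hat{x}_i)$ (your zero-cost terminal arcs plus the constant $c^1(\hat{x}_1)$ is trivially equivalent to the paper's placing $c^1(\hat{x}_1)$ on the arcs into $t$), and a comparison of the shortest $v_1^1t$-path length against the constant $L^{1,p}(\hat{x})$. Your explicit cyclic-shift argument over $\sigma \in \tabulatedset{0,\ldots,n-1}$ is exactly the paper's ``for each $j \in [n]$, w.l.o.g.\ $j=1$'' step spelled out in detail, yielding the same count of at most $n$ acyclic shortest-path computations.
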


\begin{proof} Let $\hat{x} \in \mathbb{R}^n$. We will show that the problem of deciding if, given $j\in [n]$, there exists $W\in \mathcal{W}(1,p)$ with $j\in W$ and such that $\hat{x}$ violates the inequality (\ref{faceta2}) can be reduced to a shortest path problem. W.l.o.g we set $j=1$.  

Consider the digraph $K_n^k(p)$ and associate the cost $c^1(\hat{x_i})$ with every arc $(v_l^j, v_i^{j+1})\in A$, and the cost $c^1(\hat{x_1})$ with every arc $(v_l^p, t)\in A$.

Clearly, if $W$ is the subset of $[n]$ corresponding to a $v_1^1 t$-path $P$ in $K_n^k(p)$, the length of $P$ is equal to $\sum_{i\in W} c^1(\hat{x_i})$.

Then, there exists $W\in \mathcal{W}(1,p)$ with $1\in W$ and such that $\hat{x}$ violates the inequality \eqref{faceta2}, if and only if the length of the shortest $v_1^1 t$-path in $K_n^k(p)$ is less than 
$L^{1,p} (\hat{x})$. 
\end{proof}

Since $K_n^k(p)$ is acyclic, computing each shortest path from the last theorem can be accomplished in $O(\card{A})$ time (see, e.g., \cite[Theorem 2.18]{CookEtAl98}). Moreover, from the definition 
of $K_n^k(p)$ it follows that each node has outdegree
of order $O(\frac{n}{k})$ and that the graph contains $O(nk)$ nodes. Hence, each shortest
path computation requires $O(n^2)$ time, and the separation problem of $r$-minor inequalities can be solved in $O(n^3)$ time for fixed $p,r \in \{1, \ldots, k-1 \}$.
Repeating this procedure for each possible value of $p$ and $r$, we obtain the following result.

\begin{theorem}\label{sep2}
For a fixed $k$, the separation problem for $r$-minor inequalities corresponding to minors of $C_n^k$ with parameters $d=n_1=1$ can be solved in polynomial time.
\end{theorem}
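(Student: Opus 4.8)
The plan is to obtain Theorem~\ref{sep2} as a direct consequence of Theorem~\ref{sep}, by observing that the separation problem for the full family of $r$-minor inequalities (restricted to minors with $d=n_1=1$) decomposes into finitely many instances of the parametrized problem already solved in Theorem~\ref{sep}, one for each admissible pair $(p,r)$. The key point is that once $k$ is fixed, the number of such pairs is bounded by a constant, so the overall running time remains polynomial in $n$.

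First I would recall that any circulant minor with parameters $d=n_1=1$ is determined, via Remark~\ref{condW} and the correspondence established in the preceding lemma, by a set $W \in \mathcal{W}(1,p)$ for some residue $p$ with $1 \le p < k-1$. Thus the separation of $r$-minor inequalities over all such minors amounts to the following: decide whether there exist a residue $p \in \{1, \ldots, k-2\}$, a value $r \in \{1, \ldots, k-1\}$, and a set $W \in \mathcal{W}(1,p)$ such that $\hat{x}$ violates the corresponding inequality \eqref{faceta2}. I would emphasize that the translation-invariance of circulant matrices lets us assume, without loss of generality, that $1 \in W$ for at least one translate of any violating set; this is why Theorem~\ref{sep} is stated with the fixed starting node $v_1^1$, and why we must repeat the shortest-path computation over the $n$ possible starting indices $j \in [n]$.

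Next I would carry out the bookkeeping of the running time. For each fixed triple $(j,p,r)$, Theorem~\ref{sep} together with the complexity discussion following it shows that one violation check reduces to a single shortest-path computation in the acyclic digraph $K_n^k(p)$, which can be performed in $O(|A|) = O(n^2)$ time since each of the $O(nk)$ nodes has outdegree $O(n/k)$. Ranging $j$ over $[n]$ contributes a factor of $n$, giving $O(n^3)$ per pair $(p,r)$, exactly as asserted just before the theorem. Finally, both $p$ and $r$ range over subsets of $\{1, \ldots, k-1\}$, so there are at most $(k-1)^2 = O(k^2)$ such pairs; for fixed $k$ this is a constant. Summing over all pairs yields a total running time of $O(k^2 n^3)$, which is polynomial in $n$ whenever $k$ is held fixed.

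Since the statement of Theorem~\ref{sep2} is essentially a summation of already-established polynomial bounds, I do not expect a genuine mathematical obstacle here; the proof is a short assembly argument. The only point requiring mild care is ensuring that the families $\mathcal{W}(1,p)$ for distinct residues $p$, together with the translates indexed by the starting node $j$, genuinely cover \emph{every} $r$-minor inequality arising from a minor with $d=n_1=1$ — so that no violating inequality is missed by the finite enumeration. I would therefore make explicit that every such minor falls into exactly one class $\mathcal{W}(1,p)$ (by taking $p := n_3 \bmod (k-1)$) and admits a translate with $1 \in W$, which is precisely what the correspondence lemma and Remark~\ref{condW} guarantee. With that coverage observation in place, the result follows immediately.
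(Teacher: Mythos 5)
Your proposal is correct and matches the paper's argument exactly: the paper likewise derives Theorem~\ref{sep2} by invoking Theorem~\ref{sep} once for each pair $(p,r)$ with $p,r \in \{1,\ldots,k-1\}$, using the $O(n^3)$ bound per pair (from $n$ shortest-path computations, each $O(n^2)$ in the acyclic digraph $K_n^k(p)$) and the constant number of pairs for fixed $k$. Your added coverage remark --- that every minor with $d=n_1=1$ lies in some $\mathcal{W}(1,p)$ via $p := n_3 \bmod (k-1)$ and that the $n$ starting indices handle the translation --- is a slightly more explicit writing-up of what the paper leaves implicit, not a different route.
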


\section{Conclusions}
\label{sec:conclusions}

We have presented a new class of valid inequalities for $\Qi{\C{n}{k}}$
whose Chvátal rank is at most one. These inequalities strictly generalize the class
of minor inequalities described in \cite{BNTDAM13}. Moreover, some
of these inequalities give rise to new facets of $\Qi{\C{n}{k}}$, as shown by
the example at the end of Section~\ref{sec:clausura}. Hence, 
despite of the results obtained for $\Qi{C^3_n}$ in \cite{Mah} and $\Qi{C_{2k}^k}, \Qi{C_{3k}^k}$
in \cite{BNTMMOR13}, (classic) minor inequalities, together with boolean 
facets and the rank constraint are not sufficient to provide a complete linear 
description of $\Qi{\C{n}{k}}$ in general. 

An apparently weaker problem consists in finding a complete linear description for the
first Chvátal closure of $\Q{\C{n}{k}}$.  However, as far as we are aware from
literature, no circulant matrix $\C{n}{k}$ is known for which the Chvátal rank of
$\Q{\C{n}{k}}$ is strictly larger than one. To complete the characterization
of $\Qp{\C{n}{k}}$ following the path presented here, more work is still 
needed in order to fully characterize integral generating sets for the cones 
$K(x^*)$ associated with the fractional vertices defined in Lemma~\ref{th:vertices}(iii).
All computational experiments we have conducted so far support the conjecture
that generalized minor inequalities, together with boolean facets and the rank constraint
are sufficient for describing $\Qp{\C{n}{k}}$.

Besides of the search for complete linear descriptions of $\Qp{\C{n}{k}}$ and $\Qi{\C{n}{k}}$, 
one line of future research could be the study of necessary and sufficient conditions for a 
generalized minor
inequality to define a facet of the integer polytope, similar to the conditions presented in \cite{BNTDAM13}
for classic minor inequalities. Moreover, the separation problem for generalized minor
inequalities corresponding to minors with parameters $d>1$ or $n_1>1$ is an issue
that requires further investigation. 

Another possible line of research involves establishing analogies between set
packing and set covering polyhedra. As mentioned in the introduction, a
complete linear description for set packing polytope $\Pint{\C{n}{k}}$ related 
to circulant matrices (in fact, to the more general class of \emph{circular} matrices) has 
been reported in \cite{EisenbrandEtAl05}. The authors show that
this polytope is described by nonnegativity constraints, clique inequalities
and so-termed \emph{clique family} inequalities. The first two families
can be regarded as counterparts of boolean facets for the set covering
polyhedron. Determining whether there is also an analogous class to clique family inequalities
in the case of $\Qi{\C{n}{k}}$, 
and how this class is related to the class of generalized minor inequalities
could shed more light
on the structure of this polyhedron. For instance, similarly to generalized minor inequalities, clique family inequalities have only two different coefficients, which are consecutive integers.
First steps in this direction have been undertaken in \cite{rowfamily,genrowfamily}.

\end{document}